\newtheorem{theorem}{Theorem}[section]
\newtheorem{claim}[theorem]{Claim}
\newtheorem{lemma}[theorem]{Lemma}
\numberwithin{equation}{section}
\title[Noise Sensitivity of the MST of the Complete Graph]{Noise Sensitivity of the Minimum Spanning Tree of the Complete Graph}
\author{Omer Israeli}
\address{Einstein Institute of Mathematics\\
 Hebrew University\\ Jerusalem~91904\\ Israel.}
\email{omer.israeli1@mail.huji.ac.il}
\author{Yuval Peled}
\address{Einstein Institute of Mathematics\\
 Hebrew University\\ Jerusalem~91904\\ Israel.}
\email{yuval.peled@mail.huji.ac.il}
\DeclareMathOperator*{\Cov}{Cov}
\DeclareMathOperator*{\Conn}{\mathrm{conn}}
\newcommand{\bbG}{\mathbb{G}}
\newcommand{\bbF}{\mathbb{F}}
\newcommand{\bbM}{\mathbb{M}}
\newcommand{\bbE}{\mathbb{E}}
\newcommand{\bbK}{\mathbb{K}}
\newcommand{\bbI}{\mathbb{I}}
\newcommand{\cbI}{\check{\mathbb{I}}}
\newcommand{\cbM}{\check{\mathbb{M}}}
\newcommand{\bbL}{\mathbb{L}}
\newcommand{\bbP}{\mathbb{P}}
\newcommand{\Real}{\mathbb{R}}
\newcommand{\scrM}{\mathscr{M}}
\newcommand{\scrG}{\mathscr{G}}
\newcommand{\calM}{\mathcal{M}}
\newcommand{\calG}{\mathcal{G}}
\newcommand{\cS}{\mathcal{S}}
\newcommand{\cK}{\mathcal{K}^{\infty}}
\newcommand{\cKj}{\mathcal{K}^{\infty}_{\mathrm{joint}}}
\newcommand{\bbJ}{\mathbb{J}}
\newcommand{\GHP}{\textsc{ghp}}
\newcommand{\ard}{\xrightarrow{\,\mathrm{d}\,}}
\newcommand{\arp}{\xrightarrow{\,\mathrm{p}\,}}
\newcommand{\bbGn}{\bbG_{n,\lambda}}
\newcommand{\bbGne}{\bbG^\varepsilon_{n,\lambda}}
\newcommand{\bbFn}{\bbF_{n,\lambda}}
\newcommand{\bbFne}{\bbF^\varepsilon_{n,\lambda}}
\newcommand{\bbMn}{\bbM_{n,\lambda}}
\newcommand{\bbMne}{\bbM^\varepsilon_{n,\lambda}}
\begin{document}
\maketitle
\begin{abstract}
We study the noise sensitivity of the minimum spanning tree (MST) of the $n$-vertex complete graph when edges are assigned independent random weights. 
It is known that when the graph distance is rescaled by $n^{1/3}$ and vertices are given a uniform measure, the MST converges in distribution in the Gromov-Hausdorff-Prokhorov (GHP) topology~\cite{AddarioBerry2013}.
We prove that if the weight of each edge is resampled independently with probability $\varepsilon\gg n^{-1/3}$, then the pair of rescaled minimum spanning trees --- before and after the noise --- converges in distribution to independent random spaces. Conversely, if $\varepsilon\ll n^{-1/3}$, the GHP distance between the rescaled trees goes to $0$ in probability. This implies the noise sensitivity and stability for every property of the MST that corresponds to a continuity set of the random limit. The noise threshold of $n^{-1/3}$ coincides with the critical window of the Erd\H{o}s-R\'enyi random graphs. In fact, these results follow from an analog theorem we prove regarding the minimum spanning forest of critical random graphs.
\end{abstract}

\section{Introduction}
\label{Section: Introduction}
The minimum spanning tree (MST) of a weighted graph is a classical object in discrete mathematics, whose research goes back to Bor\r{u}vka's Algorithm from 1926 (see ~\cite{Boruvka}).
Denote by $\bbM_n$  the MST of the $n$-vertex complete graph $K_n$ assigned with independent $\mathrm{U}[0,1]$-distributed edge weights $W_n=(w_e)_{e\in K_n}$.
Frieze~\cite{Frieze1985} famously showed that the expected total weight of $\bbM_n$ converges to $\zeta(3)$, initiating an extensive study of the distribution of the total weight (e.g.,~\cite{Janson1995,Janson2006}). From a purely graph-theoretic perspective, a decade old fundamental work on the metric structure of $\bbM_n$ by Addario-Berry, Broutin, Goldschmidt, and Miermont~\cite{AddarioBerry2013}, which plays a key role in this paper, discovered the existence of a scaling limit of $\bbM_n$ as a measured metric space. An explicit construction of the limit was recently obtained in ~\cite{broutin2023convex}. In addition, the local weak limit of $\bbM_n$ was studied in ~\cite{Addarioberry2013local,angel2023scaling}.

The notion of noise sensitivity of Boolean functions, that was introduced by Benjamini, Kalai, and Schramm in ~\cite{Benjamini1998}, can be directly applied to the random MST. Namely, let $\varepsilon=\varepsilon_n$ be a noise parameter, and $W_n^\varepsilon=(w^{\varepsilon}_e)_{e\in K_n}$ be obtained from $W_n$ by resampling each $w_e$ independently with probability $\varepsilon$. The MST of $K_n$ with respect to the new weights $W_n^\varepsilon$ is denoted by $\bbM_n^{\varepsilon}$. Suppose $f_n$ is a sequence of Boolean functions defined on $n$-vertex trees, such that $\bbE[f_n(\bbM_n)]$ is bounded away from $0$ and $1$ as $n\to\infty$. We say that the sequence $f_n$ is \emph{ $\varepsilon$-noise sensitive} (resp. \emph{stable}) if $\Cov(f_n(\bbM_n),f_n(\bbM_n^{\varepsilon}))\to 0$ (resp. $1$) as $n\to\infty$. This paper deals with the noise sensitivity and stability of (functions that depend on) the scaled measured metric structure of $\bbM_n$.

\subsection{The metric structure of the random MST} 
The tree $\bbM_n$ is closely related to the Erd\H{o}s-R\'enyi random graph. Kruskal's algorithm~\cite{Kruskal1956} computes the tree $\bbM_n$ by starting from an empty $n$-vertex graph and adding edges according to their (uniformly random) increasing weight order, unless the addition of an edge forms a cycle. Therefore, the minimum spanning forest (MSF) $\bbM(n,p)$ of the random graph $\mathbb G(n,p):=\{e\in K_n:w_e\le p\}$ (endowed with the random weights from $W_n$) is a subgraph of $\bbM_n$. Indeed, $\bbM(n,p)$ is one of the forests en route $\bbM_n$ in Kruskal's algorithm. In addition, $\bbM(n,p)$ can be obtained from $\bbG(n,p)$ using a cycle-breaking algorithm, i.e., by repeatedly deleting the heaviest edge participating in a cycle until the graph becomes acyclic (see \S\ref{Section: Preliminaries}). 

Fix $\lambda \in \Real$ and let $p(n,\lambda)=1/n + \lambda/n^{4/3}$. We denote the critical random graph $\bbGn:=\mathbb G(n,p(n,\lambda))$ and its MSF $\bbMn:=\mathbb M(n,p(n,\lambda))$. These graphs play a key role in the study of the MST. It is shown in ~\cite{AddarioBerry2013}  (in a sense we precisely specify below), that for a large constant $\lambda$, ''most" of the global metric structure of $\bbM_n$ is present in its subgraph $\bbMn$.
The size and structure of the connected components of $\bbGn$ have been studied extensively ~\cite{Luczak1994}. In his work on multiplicative coalescence, Aldous~\cite{Aldous1995} determined the limit law of the random sequence of the sizes of the connected components of $\bbGn$, given in decreasing order and rescaled by $n^{-2/3}$. The limit law is beautifully expressed via a reflected Brownian motion with a parabolic drift. A breakthrough result of Addario-Berry, Broutin and Goldschmidt~\cite{AddarioBerry2009} discovered the scaling limit in Gromov--Hausdorff distance of the connected components of $\bbGn$ viewed as metric spaces. 

In \cite{AddarioBerry2013}, these authors and Miermont extended this result to {\em measured} metric spaces in the Gromov--Hausdorff--Prokhorov (GHP) distance. In addition, by applying a continuous cycle-breaking algorithm on the scaling limit of the components, they discovered the scaling limit of $\bbM_n$. More formally, let $\calM$ be the space of isometry-equivalence classes of compact measured metric spaces endowed with the GHP distance. Denote by $M_n\in\calM$ the measured metric space obtained from $\bbM_n$ by rescaling graph distances by $n^{-1/3}$ and assigning a uniform measure on the vertices. The main theorem in \cite{AddarioBerry2013}  asserts that there exists a random compact measured metric space $\scrM$ such that $M_n\ard\scrM$ in the space $(\calM,d_{\GHP})$ as $n\to\infty$. The limit $\scrM$ is an $\Real$-tree that, remarkably, differs from the well-studied CRT~\cite{LeGall2005}. 

\subsection{Noise sensitivity and stability}
Noise sensitivity of Boolean functions captures whether resampling only a small, $\varepsilon$-fraction, of the input bits of a function leads to an almost independent output.
Since its introduction in ~\cite{Benjamini1998}, this concept has found various applications in theoretical computer science~\cite{mossel2005noise} and probability theory~\cite{garban_steif_2014}. 
Lubetzky and Steif ~\cite{Lubetzky2015} initiated the study of the noise-sensitivity of critical random graphs. Denote by $\bbGne$ the graph that is obtained by independently resampling each edge according to its original $
\mathrm{Ber}(p(n,\lambda))$ distribution with probability $\varepsilon$. They proved that the property that the graph contains a cycle of length in $(an^{1/3},bn^{1/3})$ is noise sensitive provided that $\varepsilon \gg n^{-1/3}$. Heuristically, a threshold of $n^{-1/3}$ for noise-sensitivity of such  ``global" graph properties seems plausible. Indeed, if $\varepsilon\gg n^{-1/3}$, then the edges that are not resampled, and appear in the graph both before and after the noise operation, form a subcritical random graph in which the property in question is degenerate. 

Roberts and \c{S}eng\"{u}l~\cite{Roberts2018} established the noise sensitivity of properties related to the size of the largest component of $\bbGn$, under the stronger assumption that $\varepsilon\gg n^{-1/6}$. Afterwards, the above heuristic was made rigor in ~\cite{Lubetzky2020} by Lubetzky and the second author, establishing that if $\varepsilon\gg n^{-1/3}$ both (i) the rescaled sizes and (ii) the rescaled measured metric spaces, obtained from the components of $\bbGn$ and $\bbGne$, are  asymptotically independent (where the entire sensitivity regime was completed in ~\cite{Frilet2021}). On the other hand, if $\varepsilon\ll n^{-1/3}$ the effect of the noise was shown to be negligible. Rossignol identified non-trivial correlations when $\varepsilon=tn^{-1/3}$~\cite{Rossignol2021}.

In the same manner the measured metric space $M_n\in\calM$ is obtained from $\bbM_n$, let $M_n^\varepsilon\in\calM$ denote the measured metric space obtained from $\bbM_n^\varepsilon$ by rescaling the graph distances by $n^{-1/3}$ and assigning a uniform measure on the vertices. Our main theorem establishes a noise threshold of $n^{-1/3}$ for any sequence of functions that depend on the scaled measured metric space. This threshold coincides with the noise threshold for critical random graphs and, accordingly, with the width of the critical window in the Erd\H{o}s-R\'enyi phase transition.

\begin{theorem}
\label{theorem: maintheorem}
Let $\varepsilon=\varepsilon_n>0$. Then, as $n\to\infty$,
\begin{enumerate}
    \item
    \label{part: 1}
    If $\varepsilon^{3}n\to \infty$ then the pair $\left(M_n,M_{n}^{\varepsilon}\right)$ converges in distribution to a pair of independent copies of $\scrM$ in $(\calM,d_{\GHP})$.
    \item
    \label{part: 2}
    If $\varepsilon^{3}n\to0$ then 
        $
            d_{\GHP}(M_{n},M_{n}^{\varepsilon})\overset{p}{\to}0
        $.
\end{enumerate}
\end{theorem}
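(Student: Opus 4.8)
The plan is to deduce Theorem~\ref{theorem: maintheorem} from the analogous statement about the minimum spanning forest $\bbMn$ of the critical random graph, which is asserted (in the abstract) to be the real engine of the argument. So the first order of business is to set up the comparison between $\bbM_n$ and $\bbMn$: following \cite{AddarioBerry2013}, for a large constant $\lambda$ the rescaled metric space obtained from the largest component of $\bbMn$ is, with probability arbitrarily close to $1$ (in $\lambda$), GHP-close to $M_n$ after rescaling; more precisely one should show that $d_{\GHP}(M_n, \calM(\bbMn))$ can be made small in probability by taking $\lambda$ large, uniformly in $n$. The point is that the edges of $\bbM_n$ not already present in $\bbMn$ are all of weight at least $p(n,\lambda)$, i.e.\ they are added by Kruskal in the ``subcritical-to-supercritical'' and later phases, and by the estimates underlying the scaling limit their contribution to the global metric structure vanishes as $\lambda\to\infty$. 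Crucially, the \emph{same} coupling of the noise must be used for $\bbM_n$ and for $\bbMn$: resampling $W_n$ with probability $\varepsilon$ induces exactly the edge-resampling of $\bbGn$ considered in \cite{Lubetzky2020,Frilet2021}, and one wants the two noisy objects $\bbM_n^\varepsilon$ and $\bbMn^\varepsilon := \mathbb M(n,p(n,\lambda))$-after-noise to be close in the same sense.

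For part~\eqref{part: 1}, with $\lambda$ fixed and large, I would argue as follows. By the critical-graph noise sensitivity results, when $\varepsilon^3 n \to \infty$ the pair of rescaled measured metric spaces coming from the components of $\bbGn$ and $\bbGne$ converges jointly to a pair of \emph{independent} copies of the Aldous/Addario-Berry--Broutin--Goldschmidt--Miermont scaling limit. Now one applies the continuous cycle-breaking map --- the measurable map on $(\calM,d_{\GHP})$-valued random variables that \cite{AddarioBerry2013} uses to pass from the scaled critical components to the scaled MSF --- to each coordinate. Since cycle-breaking is a deterministic (measurable, and in the relevant sense almost-surely continuous) functional of the component together with its internal randomness, and since the noise acts independently on the two copies, the images $(\calM(\bbMn),\calM(\bbMn^\varepsilon))$ converge jointly to independent copies of the MSF scaling limit; composing with the $\lambda\to\infty$ approximation gives that $(M_n,M_n^\varepsilon)$ converges to a pair of independent copies of $\scrM$. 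One has to be a little careful that the cycle-breaking algorithm on the finite graph interacts correctly with the noise: an edge resampled by the noise may change which edge is ``heaviest in a cycle,'' but this is precisely the content of the coupling lemmas in \cite{Lubetzky2020}, and the discrete cycle-breaking converges to its continuous analog uniformly enough that this causes no trouble.

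For part~\eqref{part: 2}, when $\varepsilon^3 n \to 0$, the strategy is a direct stability/coupling estimate rather than a distributional limit. First, restrict to $\bbMn$ for large $\lambda$ as above. Then use that with probability $\to 1$ the noise resamples \emph{no} edge of $\bbGn$ that lies on the ``relevant'' part of the structure: the expected number of edges of $\bbGn$ is $O(n\cdot p(n,\lambda)) = O(1)\cdot \binom n2 /n \cdot \ldots$, wait --- more precisely $|\bbGn| = \Theta(n)$ and the number of these that get resampled is $\mathrm{Bin}(\Theta(n),\varepsilon)$, which has mean $\Theta(\varepsilon n)$; but $\varepsilon n$ need not be small. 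The correct observation is subtler: resampling an edge of $\bbGn$ changes $\bbMn$ only if that edge (or its replacement) participates in a cycle whose heaviest edge changes, and the number of edges whose resampling actually perturbs the \emph{rescaled} metric structure by a non-negligible amount is $o(1)$ in expectation precisely when $\varepsilon\ll n^{-1/3}$, because the surgery each such resampling performs moves graph distances by $O(1)$, i.e.\ $o(n^{1/3})$ after rescaling only if few such surgeries occur --- and the per-edge probability of a ``macroscopic'' surgery is $\Theta(\varepsilon \cdot n^{-2/3})$ per vertex or so. Making this counting precise, and in particular identifying exactly which resampled edges can move the GHP-metric by $\gg \delta n^{1/3}$ and bounding their number by $o(1)$ when $\varepsilon^3 n\to 0$, is the main obstacle; I expect to handle it by importing the quantitative version of the $\varepsilon\ll n^{-1/3}$ stability for $\bbGn$ from \cite{Lubetzky2020} (the noise changes the critical components negligibly in GHP), and then checking that the cycle-breaking functional is Lipschitz-in-probability with respect to GHP perturbations of its input, so that negligible change in the components propagates to negligible change in $M_n$. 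Assembling these pieces --- discrete-to-continuum cycle-breaking, the two regimes of critical-graph noise sensitivity, and the $\lambda\to\infty$ truncation --- and then letting $\lambda\to\infty$ after $n\to\infty$ in each case completes the proof.
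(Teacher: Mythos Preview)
Your overall architecture is right: reduce to the MSF $\bbMn$ via the $\lambda\to\infty$ approximation from \cite{AddarioBerry2013}, invoke the noise sensitivity/stability of the critical random graph from \cite{Lubetzky2020,Frilet2021}, and pass through cycle-breaking. This is exactly how the paper proceeds. However, Part~\eqref{part: 1} of your proposal has a genuine gap at the key step.

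You write that cycle-breaking is a deterministic functional ``of the component together with its internal randomness,'' and that ``the noise acts independently on the two copies,'' so that applying the map coordinatewise to the asymptotically independent pair $(G_{n,\lambda},G_{n,\lambda}^\varepsilon)$ yields asymptotically independent MSFs. This is false as stated: the cycle-breaking algorithm uses the edge \emph{weights}, and the weights on $\bbGn$ and $\bbGne$ are \emph{not} independent --- every edge in $\cbI=\{e:w_e\le p,\ b_e=0\}$ carries the \emph{same} weight in both graphs. So even after one knows that the metric spaces $(G_{n,\lambda},G_{n,\lambda}^\varepsilon)$ are asymptotically independent, the pair $(\bbMn,\bbMne)$ is obtained from them by a \emph{joint} cycle-breaking $\cKj(\bbGn,\bbGne,\cbI)$, not by two independent runs $\cK(\bbGn)\times\cK(\bbGne)$. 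The coupling lemmas of \cite{Lubetzky2020} say nothing about this; they concern the unweighted metric structure. The paper's actual contribution here is to show that the ``bad'' edges in $\bbJ=\Conn(\bbGn)\cap\Conn(\bbGne)$ --- the only edges through which the two cycle-breakings can interact --- satisfy $\bbE[|\bbJ|]\le\varepsilon^{-1}=o(n^{1/3})$ (Lemma~\ref{lem:com_core}), and then to build an explicit coupling (Lemma~\ref{lem:coupling}) between $(\bbMn,\bbMne)$ and a pair $(\bbFn,\bbFne)$ obtained by genuinely independent cycle-breaking, with $d_{\GHP}^4(M_{n,\lambda}^\varepsilon,F_{n,\lambda}^\varepsilon)\arp 0$. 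Only after this decoupling does your continuous-mapping argument go through (Claim~\ref{clm:indKs}).

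For Part~\eqref{part: 2}, your sketch is too vague to constitute a proof, and the surgery-counting heuristic you propose is not the route the paper takes. The paper's argument is much cleaner: it shows (Claim~\ref{claim: core equality}) that when $\varepsilon^3 n\to 0$, with high probability $\Conn(C_j(\bbGn))=\Conn(C_j(\bbGne))=\Conn(C_j(\cbI))$, so the joint cycle-breaking removes \emph{identical} edge sets from both graphs, whence $\bbMn\Delta\bbMne=\bbGn\Delta\bbGne$ and the stability of the MSF reduces directly to the stability of $\bbGn$ already established in \cite{Lubetzky2020}. No Lipschitz property of cycle-breaking is needed.
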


For any sequence $f_n(\bbM_n):=\mathbf{1}_{M_n\in S}$ of Boolean functions, where $S$ is a continuity set of the limit space $\scrM$, our theorem implies $\varepsilon$-noise sensitivity if $\varepsilon\gg n^{-1/3}$ in Part \eqref{part: 1}, and $\varepsilon$-noise stability if $\varepsilon\ll n^{-1/3}$  in Part \eqref{part: 2}. For concrete examples, indicator functions of properties such as ``the diameter of the tree is at most $b\cdot n^{1/3}$," or ``the average distance between a pair of vertices is greater than $a\cdot n^{1/3}$" naturally arise. However, we leave the verification that these examples indeed correspond to continuity sets of $\scrM$ for future work (see Section \ref{sec:open}), noting that it appears to follow from the recent explicit construction of $\scrM$ as the Brownian parabolic tree ~\cite{broutin2023convex}.

\subsection{The random minimum spanning forest}
Following ~\cite{AddarioBerry2013}, our approach for Theorem \ref{theorem: maintheorem} starts by investigating the effect of the noise operator on the metric structure of $\bbMn$. The forest $\bbMne$ denotes the MSF of the graph $\bbGne := \{e\in K_n~:~w_e^{\varepsilon}\le p(n,\lambda)\}$ endowed with weights from $W_n^\varepsilon.$

For an $n$-vertex graph $G$ and an integer $j\ge 1$, let $\cS_j(G)$ be obtained from the $j$-th largest connected component of $G$ by rescaling the graph distances by $n^{-1/3}$ and assigning each vertex a measure of $n^{-2/3}.$ We denote by $\cS(G)$ the sequence $\cS(G)=(\cS_j(G))_{j\ge 1}$ of elements in $\calM$. We consider the two sequences of scaled measured metric spaces, given by $M_{n,\lambda}:=\cS(\bbMn)$ and  $M_{n,\lambda}^\varepsilon:=\cS(\bbMne)$. For every two sequences $S,S'$ of elements in $\calM$, let $d_\GHP^4(S,S') = (\sum_j d_\GHP(S_j,S_j')^4)^{\frac14}$ and set $\mathbb{L}_4=\{ S\in\calM^{\mathbb N}: \sum_j d_\GHP(S_j,\mathsf{Z})^4<\infty\}$ where $\mathsf{Z}$ is the zero metric space.  

It is shown in ~\cite{AddarioBerry2013} that there exists a sequence $\scrM_{\lambda}:=(\scrM_{\lambda,j})_{j\ge 1}$ of random compact measured metric spaces such that $M_{n,\lambda} \to \scrM_{\lambda}$ as $n\to\infty$ in distribution in $(\mathbb{L}_4,d_\GHP^4).$
The connection between $\bbM_n$ and $\bbMn$ from \cite[Theorem 1.2]{AddarioBerry2013} that was mentioned above can be now stated precisely. That is, if we let $\hat\scrM_{\lambda,1}$ be obtained from $\scrM_{\lambda,1}$ by renormalizing its measure to a probability measure, then $\hat\scrM_{\lambda,1}\ard \scrM$ in $d_\GHP$ as $\lambda\to\infty$. Hence, Theorem \ref{theorem: maintheorem} is derived from the following theorem.

\begin{theorem}
\label{theorem: secondtheorem}
Let $\lambda\in\Real$ and $\varepsilon=\varepsilon_n>0$.
\begin{enumerate}
    \item
    \label{part: GNP1}
    If $\varepsilon^{3}n\to \infty$ as $n\to\infty$, then the pair $\left(M_{n,\lambda},M_{n,\lambda}^\varepsilon\right)$ converges in distribution to a pair of independent copies of $\scrM_{\lambda}$ in $(\mathbb{L}_4,d_{\GHP}^4)$.
    \item
    \label{part: GNP2}
    If $\varepsilon^{3}n\to0$ as $n\to\infty$, then 
        $
            d_{\GHP}^4(M_{n,\lambda},M_{n,\lambda}^{\varepsilon})\overset{p}{\to}0.$
\end{enumerate}
\end{theorem}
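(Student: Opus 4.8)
The plan is to deduce Theorem~\ref{theorem: secondtheorem} from the noise sensitivity and stability of the critical random graphs $\bbGn,\bbGne$, transporting those facts through the continuum cycle-breaking operation of~\cite{AddarioBerry2013}. By Kruskal's algorithm $\bbMn$ is obtained from $\bbGn$ by removing every edge that is the heaviest on some cycle of $\bbGn$, and likewise $\bbMne$ from $\bbGne$; within each connected component $C$ the number of removed edges is finite (it is the surplus of $C$), and after rescaling distances by $n^{-1/3}$ each such edge has length $\to0$, hence collapses in the limit to the identification of a pair of points. Write $\cS^{+}(\bbGn)$ for the rescaled measured components of $\bbGn$ enriched by these finitely many marked pairs of points, and likewise $\cS^{+}(\bbGne)$. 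Following~\cite{AddarioBerry2013}, the continuum cycle-breaking yields an a.s.\ continuous ``un-gluing'' map $\rKrus$, defined on sequences of such decorated compact measured metric spaces endowed with a decorated variant of the $d_\GHP^4$ topology (namely $d_\GHP^4$ together with Hausdorff-type convergence of the marked pairs inside each component), for which $\rKrus(\cS^{+}(\bbGn))=M_{n,\lambda}$ and $\rKrus(\cS^{+}(\bbGne))=M_{n,\lambda}^{\varepsilon}$ at the discrete level, while $\rKrus$ applied to the decorated scaling limit $\scrG_\lambda^{+}$ of the components of $\bbGn$ equals $\scrM_\lambda$. It therefore suffices to prove the decorated graph statements: (i) if $\varepsilon^3n\to\infty$ then $(\cS^{+}(\bbGn),\cS^{+}(\bbGne))\ard(\scrG_\lambda^{+},\widetilde\scrG_\lambda^{+})$, a pair of independent copies, in the decorated $d_\GHP^4$ topology; and (ii) if $\varepsilon^3n\to0$ then the decorated $d_\GHP^4$-distance between $\cS^{+}(\bbGn)$ and $\cS^{+}(\bbGne)$ tends to $0$ in probability. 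Part~\eqref{part: GNP1} then follows from (i) by the continuous mapping theorem applied to $\rKrus\times\rKrus$, and Part~\eqref{part: GNP2} from (ii) after strengthening the a.s.\ continuity of $\rKrus$ to uniform continuity on the random compact support of $\scrG_\lambda^{+}$.

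Both (i) and (ii) build on the coupling from~\cite{Lubetzky2020,Frilet2021}. Conditioning on the resampled-edge set, write $\bbGn=\bbGnz\cup B$ and $\bbGne=\bbGnz\cup C$, where $\bbGnz$ consists of the non-resampled edges below $p(n,\lambda)$ (so $\bbGnz\sim\mathbb G(n,(1-\varepsilon)p(n,\lambda))$), and $B$ (resp.\ $C$) consists of the resampled edges whose original (resp.\ resampled) weight is below $p(n,\lambda)$; conditionally on $\bbGnz$ and the resampled set, $B$ and $C$ are independent, each of size $\asymp\varepsilon n$. In regime (i), $\bbGnz$ is subcritical (as $\varepsilon\gg n^{-1/3}$), so all its components have $o(n^{2/3})$ vertices and each large component of $\bbGn$ or of $\bbGne$ inherits only $O(1)$ cycles from $\bbGnz$. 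Consequently the large components of $\bbGn$ (resp.\ of $\bbGne$) are assembled by the sprinkling $B$ (resp.\ $C$) out of the small pieces of $\bbGnz$, and all their marks, apart from the $O(1)$ inherited ones, lie in $B$ (resp.\ in $C$). The undecorated form of (i) is precisely the main result of~\cite{Lubetzky2020,Frilet2021}; the extra step is to re-run their exploration of the large components while carrying the marks, using that this exploration depends on $\bbGnz$ only through ``profile'' statistics (essentially the multiset of sizes of the consumed pieces) that concentrate, conditionally on which the two decorated component-sequences are functions of the disjoint independent sprinklings $B$ and $C$, and hence decouple asymptotically.

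In regime (ii) the common part $\bbGnz$ is itself critical, since $(1-\varepsilon)p(n,\lambda)=p(n,\lambda-o(1))$, and $\bbGn,\bbGne$ differ from it only by the $o(n^{2/3})$ sprinkled edges of $B,C$. Since a sprinkled edge lands with both endpoints inside a fixed $\Theta(n^{2/3})$-component of $\bbGnz$, or joins two such components, with probability $O(n^{-2/3})$, the expected number of sprinkled edges that alter the surplus of, or merge together, any of the $O(1)$ largest components is $O(\varepsilon n^{1/3})=o(1)$; so with high probability none do, and the decorated top-$k$ components of $\bbGn$ and of $\bbGne$ both agree with those of $\bbGnz$ up to the attachment of tree-pieces of $o(n^{2/3})$ vertices each. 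These attachments have maximal rescaled diameter $o(1)$ and, since the susceptibility of $\bbGnz$ is $O(n^{1/3})$, total rescaled mass $O(\varepsilon n^{1/3})=o(1)$, so every top component is perturbed by $o(1)$ in decorated $d_\GHP$. Together with the uniform-in-$n$ tail estimate $\sum_{j>k}d_\GHP(\cS_j(\bbGn),\mathsf Z)^4\to0$ as $k\to\infty$, coming from~\cite{AddarioBerry2013}-type bounds, this yields (ii). The threshold $n^{-1/3}$ enters here precisely through $\varepsilon n^{1/3}\to0$, mirroring the graph case.

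The main obstacle is (i): the undecorated convergence to independent copies is quoted, but the marks are genuinely needed, because the location of a removed edge is \emph{not} a function of the limiting measured metric space alone --- for instance, for a single loop the un-gluing point need not be the point carrying the most hanging mass --- so one must revisit the coupling of~\cite{Lubetzky2020,Frilet2021} and verify that the marks of the two large-component sequences become independent in the limit. A secondary, more technical obstacle is establishing the a.s.\ continuity of $\rKrus$ in the decorated $d_\GHP^4$ topology --- particularly at components of surplus $\ge2$, where the successive un-gluings interact --- and upgrading it to the uniform continuity needed to pass from convergence in distribution to convergence in probability in Part~\eqref{part: GNP2}; both rely on the a.s.\ finiteness and local constancy of the surplus together with the construction of~\cite{AddarioBerry2013}.
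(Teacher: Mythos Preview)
Your framework---pass to decorated spaces and apply a continuous un-gluing map---is reasonable, but the heart of Part~\eqref{part: GNP1}, your claim (i), has a genuine gap that your sketch does not close. You assert that, conditionally on profile statistics of $\bbGnz$, ``the two decorated component-sequences are functions of the disjoint independent sprinklings $B$ and $C$.'' This is false: the marks are the \emph{heaviest} edges on cycles, and a typical cycle in a large component of $\bbGn$ consists of a long path inside $\bbGnz$ closed by one or two edges of $B$; the heaviest edge on such a cycle is, with non-negligible probability, an edge of $\bbGnz$. That same $\bbGnz$-edge can lie on a cycle of $\bbGne$ (closed by $C$), and it carries the \emph{same} weight in both graphs since it was not resampled. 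Hence the marks of $\bbGn$ and $\bbGne$ are not determined by $B$ and $C$ separately, and their joint law is genuinely correlated through the shared weights on $\bbGnz\cap\Conn(\bbGn)\cap\Conn(\bbGne)$. Your remark that the surplus is $O(1)$ and that $\bbGnz$ contributes only $O(1)$ cycles does not help: the total number of marks is already $O(1)$, so ``all marks apart from $O(1)$ inherited ones'' is vacuous.

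The paper attacks exactly this point, but by a different route. Rather than decorating and invoking continuous mapping, it works directly with the joint cycle-breaking $\cKj(\bbGn,\bbGne,\cbI)$ and shows it is close to the product $\cK(\bbGn)\times\cK(\bbGne)$. The key estimate (Lemma~\ref{lem:com_core}) is that $\bbE\bigl[|\Conn(\bbGn)\cap\Conn(\bbGne)|\bigr]\le\varepsilon^{-1}=o(n^{1/3})$; since the shortest cycle in a large component has length $\Theta(n^{1/3})$ and the number of cycles is $O(1)$, the probability that any removed edge lies in this ``bad'' set tends to~$0$. This lets one couple $(\bbMn,\bbMne)$ to a pair $(\bbFn,\bbFne)$ obtained by \emph{independent} cycle-breaking on $(\bbGn,\bbGne)$, after which the asymptotic independence of $(\bbGn,\bbGne)$ from~\cite{Lubetzky2020,Frilet2021} and the continuity of $\cK$ finish the argument. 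Your plan could be repaired by inserting precisely this estimate to decouple the marks, but as written it is missing the idea that makes (i) go through. For Part~\eqref{part: GNP2} your outline is close in spirit to the paper's (both show the cycle structure is unchanged by the noise), though the paper again avoids any uniform-continuity upgrade of $\rKrus$ by arguing directly that the two runs of cycle-breaking remove the same edges.
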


The noise sensitivity of critical random graphs from ~\cite{Lubetzky2020} and \cite{Frilet2021} establishes that if $\varepsilon^{3}n\to \infty$ then the scaled measure metric spaces of the components of $\bbGn$ and $\bbGne$ are asymptotically independent. This fact seemingly excludes any non-negligible correlation between the scaled measure metric spaces of $\bbMn$ and $\bbMne$, which are obtained from $\bbGn$ and $\bbGne$ respectively by the cycle-breaking algorithm. However, the existence of ``bad" edges that participate in cycles in both graphs,  and with the same (not resampled) weight, may correlate the two runs of the cycle-breaking algorithm. We analyze the joint cycle-breaking algorithm and prove that if $\varepsilon^{3}n\to \infty$ then, with high probability, the number of such ``bad" edges is too small to generate a non-negligible correlation. For the stability part, we show that if $\varepsilon^{3}n\to 0$ then, typically, the two runs of the cycle-breaking algorithm are  identical.

The remainder of the paper is organized as follows. Section \ref{Section: Preliminaries} contains some preliminaries and additional background material needed for the proof of the main results. In Section \ref{sec:MSF} we prove both parts of Theorem \ref{theorem: secondtheorem}, and in Section \ref{sec:MST} we complete the proof of Theorem \ref{theorem: maintheorem}. We conclude with some open problems in Section \ref{sec:open}.

\section{Preliminaries}
\label{Section: Preliminaries}
\subsection{Notations}
\label{subsec: notations}
For clarity, we briefly recall the notations that were interspersed within the introduction and present some additional concepts needed in the proofs.
Let $n$ be an integer and $K_n$ the complete $n$-vertex graph. The edges of $K_n$ are assigned independent and $\mathrm{U}[0,1]$-distributed weights $W_n:=(w_e)_{e\in K_n}$. 
Given a noise-parameter $\varepsilon=\varepsilon_n$, we define the weights $W_n^\varepsilon:=(w^{\varepsilon}_e)_ {e\in K_n}$ by
\[
     w^\varepsilon_e := 
     \begin{cases}
      w_e & b_e = 0 \\
      w'_e & b_e =1 \\
     \end{cases},
\]
where $b_e$ is an independent $\mathrm{Ber}(\varepsilon)$ random variable and $w'_e$ is an independent $\mathrm{U}[0,1]$-distributed weight. In words, we independently, with probability $\varepsilon$, {\em resample} the weight of each edge. 

All the random graphs we study are measurable with respect to $W_n,W_n^\varepsilon$. Namely, $\bbM_n,\bbM_n^\varepsilon$ are the minimum spanning trees (MST) of $K_n$ under the weights $W_n,W_n^\varepsilon$ respectively. In addition, we always refer to $p$ as $p:=p(n,\lambda)=1/n+\lambda/n^{4/3}$, where $\lambda\in\Real$, and denote the random graphs
\[
\bbGn := \{e\in K_n~:~w_e\le p\}\mbox{, and }\bbGne := \{e\in K_n~:~w_e^{\varepsilon}\le p\}\,.
\]
Note that as random (unweighted) graphs, $\bbGne$ is obtained from $\bbGn$ by applying the standard $\varepsilon$-noise-operator that independently, with probability $\varepsilon$, resamples each edge.
We denote the intersection of these two graphs by $\bbI:=\bbGn\cap\bbGne$, and 
its subgraph
$$
\cbI = \{ e\in \bbGn\cap\bbGne~:~b_e=0\},
$$
consisting of the edges that appear in $\bbGn$ and whose weight was not resampled --- and thus also appear in $\bbGne$. We denote by $\bbMn$ (resp. $\bbMne$)  the minimum spanning forest (MSF) of $\bbGn$ (resp. $\bbGne$) when endowed with edge weights from $W_n$ (resp. $W_n^\varepsilon$).

To some of the random graphs above, we associate a scaled measured metric space in $\calM$. Recall that $\cS(G)$ is a sequence of elements in $\calM$ that is obtained from an $n$-vertex graph $G$ by ordering its components in decreasing size, rescaling the graph distances by $n^{-1/3}$ and assigning each vertex a measure of $n^{-2/3}.$ We denote 
$M_{n,\lambda}=\cS(\bbMn),~M_{n,\lambda}^\varepsilon=\cS(\bbMne),~
G_{n,\lambda}=\cS(\bbGn)$ and $G_{n,\lambda}^\varepsilon=\cS(\bbGne).$
We sometime refer to specific elements in these sequences, e.g., 
$M_{n,\lambda,j}$ denotes the measured metric space obtained from the $j$-th largest component $C_j(\bbGn)$ of the graph $\bbGn$. In addition, given a connected graph $G$, let $\hat{\cS}(G)$ be obtained from $G$ by rescaling the graph distance by $n^{-1/3}$ and assigning a uniform probability measure on its vertices. We view $M_n = \hat{\cS}(\bbM_n)$, $M_n^\varepsilon = \hat{\cS}(\bbM_n^\varepsilon)$ as elements of $\calM$.

 \subsection{The Joint Cycle Breaking Algorithm}
\label{Subsection: The Joint Cycle Breaking Algorithm}

An alternative approach to the well-known Kruskal's algorithm for finding the MSF of a weighted graph is the cycle-breaking algorithm, aka the reverse-delete algorithm, which was also introduced by Kruskal in \cite{Kruskal1956}. Consider $\Conn(G)$, the set of edges of $G$ that participate in a cycle. In other words, $e\in\Conn(G)$ if removing it does not increase the number of connected components. The algorithm finds the MSF of a given weighted graph $G$ by sequentially removing the edge with the largest weight from $\Conn(G)$. Once the remaining graph is acyclic, its edges form the MSF of $G$.

For a graph $G$, let $\cK(G)$ denote the random MSF of $G$ if the edges are given exchangeable, distinct random weights. In such a case, $\cK(G)$ can be sampled by running a cycle-breaking algorithm on $G$ that removes a uniformly random edge from $\Conn(G)$ in each step. Indeed, the heaviest edge in $\Conn(G)$ is uniformly distributed, regardless of which edges were exposed as the heaviest in the previous steps of the algorithm. For example, conditioned on (the edge set of) $\bbGn$, the forest $\bbMn$ is $\cK(\bbGn)$-distributed. 

Given two finite graphs $G_1,G_2$ and a common subgraph $H\subset G_1\cap G_2$, let $W^i:=(w^i_e)_{e\in G_i},~i=1,2,$ be two exchangeable random weights given to the edges of $G_1$ and $G_2$ that are distinct except that
$
w^1_e=w^2_e \iff e\in H.
$ We denote by $\cKj(G_1,G_2,H)$ the joint distribution of the pair of minimum spanning forests of $G_1,G_2$ under the above random edge weights $W^1,W^2$. 

Clearly, the marginal distributions of $\cKj(G_1,G_2,H)$ are $\cK(G_1)$ and $\cK(G_2)$. In addition, if $H\cap\Conn(G_1)\cap\Conn(G_2)=\emptyset$  then $\cKj(G_1,G_2,H)=\cK(G_1)\times\cK(G_2)$, i.e., the joint cycle-breaking algorithm can be carried out by two independent cycle-breaking algorithms on $G_1$ and $G_2$. 
On the other extreme, if $\Conn(G_1)=\Conn(G_2)$ and $\Conn(G_1)\subseteq H$, then the exact same set of edges is removed in both  graphs during the run of the joint cycle breaking algorithm. In such a case, if $(M_1,M_2)\sim\cKj(G_1,G_2,H)$ then $M_1\sim\cK(G_1)$ and $M_2$ is then deterministically defined by $M_2=G_2\setminus (G_1\setminus M_1)$. 

The example prompting this definition in our study is that, conditioned on (the edge sets of) $\bbGn,\bbGne,\cbI$ defined in \S\ref{subsec: notations}, the distribution of the pair $(\bbMn,\bbMne)$ is $\cKj(\bbGn,\bbGne,\cbI)$.
Indeed, among the edges in $\bbGn\cup\bbGne$, only those in $\cbI$ have the same weight in $W_n$ and $W_n^\varepsilon$, and all the other weights are independent. Roughly speaking, the two extreme cases for $H$ mentioned above describe what typically occurs in the noise sensitivity and stability regimes.

\subsection{Scaling limits}
We conclude this section by briefly reviewing previous works regarding the scaling limits of the measured metric spaces obtained from the random graphs that appear in our work. In ~\cite{AddarioBerry2013} (building on results from ~\cite{AddarioBerry2009}) it is proved that there exists a sequence $\scrG_\lambda=(\scrG_{\lambda,j})_{j\ge 1}$ of random elements in $\calM$ such that $G_{n,\lambda}\ard \scrG_\lambda$ in $(\mathbb L_4,d_{\GHP}^4)$ as $n\to\infty$. Furthermore, by defining a continuous version  of the cycle-breaking algorithm (whose distribution is also denoted by $\cK$), they obtain a sequence $\scrM_\lambda=(\scrM_{\lambda,j})_{j\ge 1}$ of random elements in $\calM$ which is $\cK(\scrG_\lambda)$-distributed conditioned on $\scrG_\lambda$. 
They prove that $M_{n,\lambda}\ard \scrM_\lambda$ in $(\mathbb L_4,d_{\GHP}^4)$ as $n\to\infty$ by establishing the continuity of $\cK$, and that the scaling limit $\scrM$ of $M_n$ is obtained by renormalizing the measure of $\scrM_{\lambda,1}$ to a probability measure and taking $\lambda\to\infty$ (as mentioned in \S \ref{Section: Introduction}).

\section{Proof of Theorem \ref{theorem: secondtheorem}}\label{sec:MSF}
\subsection{Noise Sensitivity of the MSF}
\label{Subsection: Noise Sensitivity}
We saw that the pair $(\bbMn,\bbMne)$ is obtained by a joint cycle-breaking algorithm and that it is $\cKj(\bbGn,\bbGne,\cbI)$ - distributed.
Our first goal is to show that, if $\varepsilon^3n\to\infty$, the joint cycle-breaking is close to two {\em independent} runs of the cycle-breaking algorithm.
We start by bounding the number of edges that participate in a cycle in both graphs, and, as a result, can potentially correlate the two forests during the joint cycle-breaking. 
\begin{lemma}
\label{lem:com_core}
    Fix $\lambda\in\Real$ and let $\varepsilon^3 n\to\infty$, $\bbGn,\bbGne$ as defined in \S\ref{Section: Preliminaries}, and $\bbJ= \Conn(\bbGn)\cap\Conn(\bbGne).$ Then, 
    
$$\bbP(|\bbJ|>\omega\varepsilon^{-1}) \to 0\,,$$
as $n\to\infty$ for every diverging sequence $\omega=\omega(n)\to\infty.$
\end{lemma}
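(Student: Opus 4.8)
The plan is to bound $\mathbb{E}[|\bbJ|]$ by a union bound over edges $e\in K_n$, estimating $\mathbb{P}(e\in\Conn(\bbGn)\cap\Conn(\bbGne))$. The key observation is that an edge $e=\{u,v\}$ lies in $\Conn(\bbGn)$ precisely when $u$ and $v$ are joined by a path in $\bbGn$ avoiding $e$; equivalently, $e$ lies on a cycle of $\bbGn$. Since $\bbGne$ is obtained from $\bbGn$ by the standard $\varepsilon$-noise operator, the natural approach is to condition on whether $b_e=0$ or $b_e=1$, i.e., whether $e$ keeps its weight. The two cases behave quite differently, and I expect the case $b_e=1$ (the weight of $e$ is resampled) to be the easy one and $b_e=0$ to require more care.

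First I would handle a cleaner surrogate. For $e\in\Conn(\bbGn)\cap\Conn(\bbGne)$, in particular $e\in\bbGn$ and the endpoints of $e$ lie in a component of $\bbGn-e$ of size at least $2$, and likewise for $\bbGne$. The standard estimate for the critical window (e.g.\ from the component-size results of \L uczak and Aldous cited in the introduction, or directly from a breadth-first exploration) gives that for a fixed pair $u,v$, the probability that $u$ and $v$ lie in a common component of $\bbGn$ of some size $k$ decays, and summing over $k$ one gets $\mathbb{P}(u\leftrightarrow v\text{ in }\bbGn)=O(n^{-1/3}\cdot n^{-1})$ roughly — the point being that the expected number of vertices in $v$'s component is $\Theta(n^{1/3})$, so the probability a specific $u$ is among them is $\Theta(n^{-2/3})$. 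Multiplying by $\binom{n}{2}$ edges gives $\Theta(n^{4/3})$, which is far too large; this is why one cannot ignore the dependence on $\bbGne$ and must use that the \emph{same} connection must (essentially) survive, or be recreated, in $\bbGne$.

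The crux is therefore the following dichotomy for an edge $e=\{u,v\}$ in $\bbI=\bbGn\cap\bbGne$. Either (i) there is a $u$–$v$ path avoiding $e$ whose edges all lie in $\cbI$ (the non-resampled intersection), in which case this single path witnesses $e\in\Conn$ for both graphs simultaneously; or (ii) every $u$–$v$ path avoiding $e$ in $\bbGn$ uses at least one resampled edge (an edge with $b_{e'}=1$), and similarly the path realizing $e\in\Conn(\bbGne)$ uses a resampled edge. I would bound $\mathbb{P}$(case (i)) and $\mathbb{P}$(case (ii)) separately. For case (i): $\cbI$ is a random graph where each edge of $K_n$ is present independently with probability $(1-\varepsilon)p$, which since $\varepsilon^3 n\to\infty$ means $\varepsilon p \gg n^{-1/3}\cdot n^{-1}\cdot$(something) — more precisely $(1-\varepsilon)p < p = 1/n + \lambda/n^{4/3}$, so $\cbI$ is subcritical (its average degree is $(1-\varepsilon)(1+\lambda n^{-1/3}) < 1$ for large $n$ once $\varepsilon \gg n^{-1/3}$), and in a subcritical $G(n,c/n)$ with $c<1$ the expected number of edges on cycles is $O(1)$. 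Handling the "near-critical" regime $\varepsilon = \omega(n^{-1/3})$ but $\varepsilon\to 0$ requires the quantitative subcriticality estimate: average degree $1-\delta$ with $\delta \asymp \varepsilon$, expected number of cycle-edges $\asymp \delta^{-1}\asymp\varepsilon^{-1}$, which is exactly the claimed bound $\varepsilon^{-1}$ and explains where it comes from. For case (ii): a $u$–$v$ path in $\bbGn$ using a resampled edge $e'$ together with the requirement that $e\in\Conn(\bbGne)$ as well forces a somewhat elaborate structure (two paths or a theta-like subgraph spanning $u,v$ with resampled edges), and one pays an extra factor of $\varepsilon$ (the probability $b_{e'}=1$) per resampled edge; I expect this to contribute $o(\varepsilon^{-1})$ or even $o(1)$.

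The main obstacle I anticipate is making case (i) quantitatively sharp across the whole range $n^{-1/3}\ll\varepsilon\le 1$: one needs the estimate "expected number of edges lying on a cycle in $G(n,(1-\delta)/n)$ is $O(\delta^{-1})$" uniformly for $\delta$ ranging from $\Theta(1)$ down to $\omega(n^{-1/3})$, together with the correction from the $\lambda n^{-1/3}$ term in $p$, which shifts the effective criticality parameter but is lower-order once $\varepsilon\gg n^{-1/3}$. This is a routine but careful first-moment computation: the expected number of cycle-edges equals $\sum_{k\ge 3} (\text{expected number of cycles of length }k)\cdot k$, and $\mathbb{E}[\#\text{cycles of length }k] \le \frac{1}{2k}\cdot n^k p^k \le \frac{1}{2k}((1-\delta))^k$-ish, so the sum is $\sum_k \tfrac12 (1-\delta)^k = O(\delta^{-1})$. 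Combining, $\mathbb{E}[|\bbJ|] \le O(\varepsilon^{-1}) + o(\varepsilon^{-1})$, and absorbing constants (or sharpening the first-moment bound, which has room to spare) yields $\mathbb{E}[|\bbJ|]\le\varepsilon^{-1}$; finally $\varepsilon^{-1}=o(n^{1/3})$ is immediate from $\varepsilon^3 n\to\infty$.
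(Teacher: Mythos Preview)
Your decomposition into case (i) (a $u$--$v$ path avoiding $e$ lies entirely in the subcritical intersection) and case (ii) (it does not) is exactly the paper's split into the events $A=\{e\in\Conn(\bbI)\}$ and $B\setminus A$, and your treatment of case (i) via the first-moment cycle count in a subcritical $G(n,(1-\delta)/n)$ is correct and matches the paper's bound $\bbP(A)\le \theta^3 n/(1-\theta n)=(1+o(1))/(\varepsilon n^2)$, giving the dominant contribution $\le (1+o(1))/(2\varepsilon)$ to $\bbE[|\bbJ|]$.

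The gap is in case (ii). The heuristic ``one pays an extra factor of $\varepsilon$ per resampled edge, so this is $o(\varepsilon^{-1})$'' does not translate into a proof, because in the critical window the $u$--$v$ paths that witness $e\in\Conn(\bbGn)$ are not short: they typically route through components of size $\Theta(n^{2/3})$, and a naive path enumeration diverges. The paper's argument for $B\setminus A$ is substantially more structured than your sketch suggests. It conditions on the components $C_u,C_v$ of $u,v$ in the \emph{subcritical} graph $\bbI-e$; then any witnessing path in $\bbGn$ (resp.\ $\bbGne$) must either use a single ``internal'' edge of $F_0$ between $C_u$ and $C_v$, or two ``external'' edges leaving $C_u\cup C_v$ and a component $C_j(\bbK_i)$ of the remaining critical graph. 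Crucially, edges in $F_0\cup F_1$ are known not to lie in $\bbI$, so each appears in $\bbGn$ (or $\bbGne$) with conditional probability $\rho\approx \varepsilon/n$ --- this is where the $\varepsilon$-saving actually enters, and it is per \emph{boundary} edge, not per resampled edge along the path. Closing the bound then requires two further nontrivial inputs: the asymptotic independence of $\sum_j|C_j(\bbGn)|^2$ and $\sum_j|C_j(\bbGne)|^2$ from the noise-sensitivity result of Lubetzky--Peled, and the subcritical third-moment bound $\bbE[Y^2]\le (1-n\theta)^{-3}\sim\varepsilon^{-3}$ of Janson--\L uczak for $|C_u|,|C_v|$. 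Without these ingredients your case (ii) is not under control; in particular, the factor $(|C_u||C_v|)^2$ coming from the boundary-edge choices can be as large as $\varepsilon^{-6}$ in expectation and must be balanced against the $\rho$-factors carefully.
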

In the proof below, we denote by $G-e$ the subgraph of $G$ on the same vertex set with the edge set $E(G)\setminus\{e\}$, and by $G\setminus A$ the subgraph of $G$ induced by the vertices that are not in the vertex subset $A$. 

\begin{proof}
Recall that  $\bbI$ denotes the intersection $\bbGn \cap \bbGne$. The graph $\bbI$ is a $\mathcal G(n,\theta)$ random graph, where $$\theta:=p(1-\varepsilon + \varepsilon p)=\frac{1-\varepsilon(1+o(1))}{n}.$$ Fix some edge $e=\{u,v\}$ in $K_n$. We consider two  disjoint possibilities for the occurrence of  the event $e\in\bbJ$:
\begin{enumerate}
    \item \label{enum3.1: 1} The event $A=A_e=\{e\in \Conn(\bbI)\}$ where $e$ belongs to a cycle that is contained in both graphs, or 
    \item the event $B=B_e=\{e\in \bbJ\setminus\Conn(\bbI)\}$ where there are two distinct cycles in $\bbGn$ and $\bbGne$ both containing $e$, and there is no cycle in $\bbI$ containing $e.$
\end{enumerate}

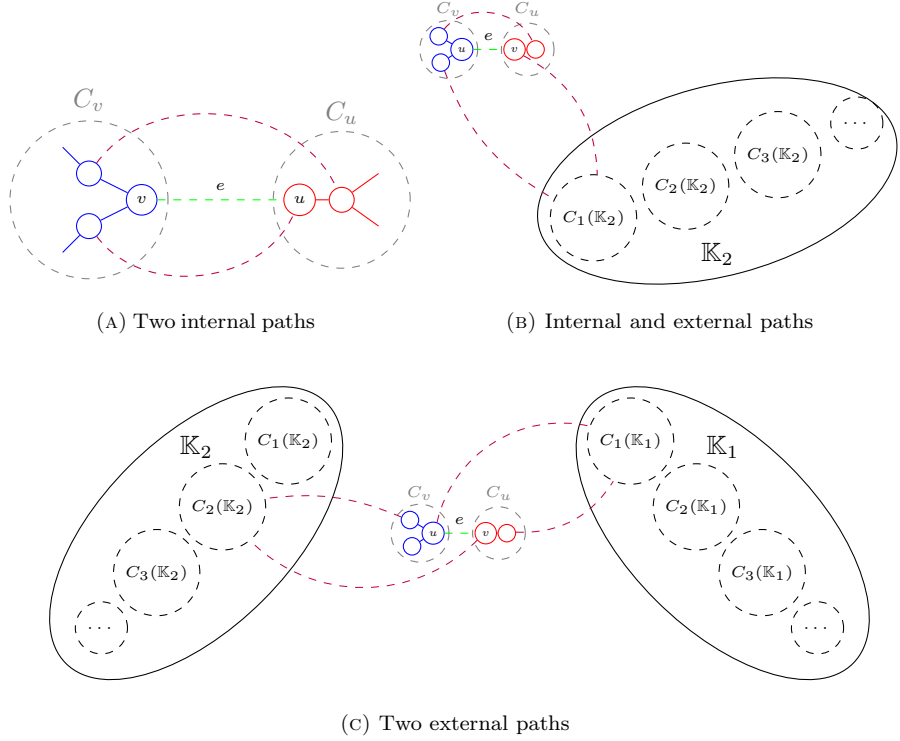
\begin{figure}[ht]
\centering
\subfloat[Two internal paths]{
\begin{tikzpicture}[scale= 0.7]
\label{fig: 1B}
        \node[shape=circle, draw=red, node font = \tiny ] (j) at (1.5, 2) {$u$};
        \node[shape=circle, draw=blue, node font = \tiny] (i) at (-1.5,2) {$v$};
        \node[shape=circle, draw=blue] (A) at (-2.5,1.5) {};
        \node[shape=circle, draw=blue] (B) at (-2.5,2.5) {};
        \node[shape=circle, draw=red] (C) at (2.3,2) {};
        \path[green, dashed, font = \tiny] (i) edge node[above] {\textcolor{black}{$e$}} (j);
        
        \path[blue] (i) edge node[above] {} (A);
        \path[blue] (i) edge node[above] {} (B);
        \draw[blue] (A) -- (-3,1);
        \draw[blue] (B) -- (-3,3);
        \draw[gray,dashed] (-2.5, 2) circle (1.5) node[above=30] {$C_v$};
        \path[red] (j) edge node[above] {} (C);
        \draw[red] (C) -- (3,2.5);
        \draw[red] (C) -- (3,1.5);
        \draw[gray ,dashed] (2.3, 2) circle (1.3) node[above=25] {$C_u$};
        \path[purple, dashed, bend left = 60,thin, font = \tiny] (B) edge node[above] {} (C);
        \path[purple, dashed, bend right = 60,thin, font = \tiny] (A) edge node[above] {} (j);
    \end{tikzpicture}}
\subfloat[Internal and external paths]{
    \begin{tikzpicture}[scale = 0.7]
    \label{fig: 1C}
        \node[shape=circle, draw=red, node font = \tiny, scale = 0.7] (j) at (-1.5, -2) {$v$};
        \node[shape=circle, draw=blue,node font = \tiny, scale = 0.7] (i) at (-2.5,-2) {$u$};
        \node[shape=circle, draw=blue, scale = 0.7] (A) at (-2.9,-2.25) {};
        \node[shape=circle, draw=blue, scale = 0.7] (B) at (-2.94,-1.75) {};
        \node[shape=circle, draw=red, scale = 0.7] (C) at (-1.1,-2) {};
        \path[green, dashed, font = \tiny,scale = 0.7] (i) edge node[above] {\textcolor{black}{$e$}} (j);
        \path[blue] (i) edge node[above] {} (A);
        \path[blue] (i) edge node[above] {} (B);
         \path[red] (j) edge node[above] {} (C);
        \draw[gray,dashed] (-2.75, -2 ) circle (0.55) node[above =10 , node font = \tiny] {$C_v$};
        \draw[gray ,dashed] (-1.25, -2) circle (0.5) node[above = 10,  node font = \tiny] {$C_u$};
        
        \node[shape=circle, draw=black, dashed,font  = \tiny] (CC1) at (0,-5.2) {$C_1(\bbK_2)$}; 
        \node[shape=circle, draw=black, dashed,font  = \tiny] (CC2) at (1.75,-4.6) {$C_2(\bbK_2)$}; 
        \node[shape=circle, draw=black, dashed,font = \tiny] (CC3) at (3.5,-4) {$C_3(\bbK_2)$};
        \node[shape=circle, draw=black, dashed,font = \tiny] (CC4) at (5,-3.4) {$\cdots$};
        \draw[rotate around ={-75:(2.35,-4.5)}] (2.35,-4.5) ellipse (1.8cm and 3.5cm) node[below = 20] {$\bbK_2$};

        \path[purple, dashed, bend left = 60] (B) edge node[] {} (C);
        \path[purple, dashed, bend right = 20] (A) edge node[] {} (CC1);
        \path[purple, dashed, bend left = 30] (j) edge node[] {} (CC1);
    \end{tikzpicture}}    
    \\
    \subfloat[Two external paths]{
    \begin{tikzpicture}[scale = 0.7]
    \label{fig: 1D}
        \node[shape=circle, draw=red, node font = \tiny, scale = 0.7] (j) at (0.5, 0) {$v$};
        \node[shape=circle, draw=blue,node font = \tiny, scale = 0.7] (i) at (-0.5,0) {$u$};
        \path[green, dashed, font = \tiny,scale = 0.7] (i) edge node[above] {\textcolor{black}{$e$}} (j);
        
        \node[shape=circle, draw=blue, scale = 0.7] (A) at (-0.9,-0.25) {};
        \node[shape=circle, draw=blue, scale = 0.7] (B) at (-0.94,0.25) {};
        \draw[gray,dashed] (-0.75, 0 ) circle (0.55) node[above =10 , node font = \tiny] {$C_v$};
        
        \node[shape=circle, draw=red, scale = 0.7] (C) at (0.9,0) {};
        \draw[gray ,dashed] (0.75, 0) circle (0.5) node[above = 10,  node font = \tiny] {$C_u$};
        
        \path[blue] (i) edge node[above] {} (A);
        \path[blue] (i) edge node[above] {} (B);
         \path[red] (j) edge node[above] {} (C);

        \node[shape=circle, draw=black, dashed,font  = \tiny] (C1) at (3.25,1.75) {$C_1(\bbK_1)$}; 
        \node[shape=circle, draw=black, dashed,font  = \tiny] (C2) at (4.5,0.5) {$C_2(\bbK_1)$}; 
        \node[shape=circle, draw=black, dashed,font = \tiny] (C3) at (5.75,-0.75) {$C_3(\bbK_1)$};
        \node[shape=circle, draw=black, dashed,font = \tiny] (C4) at (6.8,-1.8) {$\cdots$};
        \draw[rotate around ={45:(5,0)}] (5,0) ellipse (1.8cm and 3.5cm) node[above = 25] {$\bbK_1$};
        
        \node[shape=circle, draw=black, dashed,font  = \tiny] (CC1) at (-3.25,1.75) {$C_1(\bbK_2)$}; 
        \node[shape=circle, draw=black, dashed,font  = \tiny] (CC2) at (-4.5,0.5) {$C_2(\bbK_2)$}; 
        \node[shape=circle, draw=black, dashed,font = \tiny] (CC3) at (-5.75,-0.75) {$C_3(\bbK_2)$};
        \node[shape=circle, draw=black, dashed,font = \tiny] (CC4) at (-6.8,-1.8) {$\cdots$};
        \draw[rotate around ={-45:(-5,0)}] (-5,0) ellipse (1.8cm and 3.5cm) node[above = 25] {$\bbK_2$};
        \path[purple, dashed, bend right = 15, very thin] (B) edge node[] {} (CC2);
        \path[purple, dashed, bend left = 45, very thin] (j) edge node[] {} (CC2);
        
        \path[purple, dashed, bend left = 45, very thin] (i) edge node[] {} (C1);
        \path[purple, dashed, bend right = 30,very thin] (C) edge node[] {} (C1);
         
    \end{tikzpicture}}
    \caption{The three combinations of internal and external paths between $u$ and $v$ that can cause the occurrence of $B$.}
    \label{fig: exposure of Gn and Gne}
\end{figure}
We bound the probability of $A$ by observing it occurs if and only if $e\in\bbI$ and there is a path in the graph $\bbI - e$ from $v$ to $u$.
By enumerating all the paths from $v$ to $u$ with $k\ge 1$ additional vertices we find that
\begin{equation}\label{eq:A}
    \bbP(A) \le \theta\sum_{k\ge 1}n^k\theta^{k+1} \le \frac{\theta^3 n}{1-\theta n}= \frac{1+o(1)}{\varepsilon n^2},
\end{equation}
where the last inequality follows from the relations $\theta\le 1/n$ and $1-\theta n=\varepsilon(1+o(1))$.

Next, we turn to bound the probability of $B$. Let $C_x$, for $x\in\{u,v\}$, denote the component of the vertex $x$ in the graph $\bbI-e$. We further denote $\mathbb K_1:=\bbGn\setminus(C_u\cup C_v)$ and $\mathbb K_2:=\bbGne\setminus(C_u\cup C_v)$.

\begin{claim}
\label{claim: external cycles}
For every  $C_u,C_v, \mathbb K_1,\mathbb K_2$ as above there holds
\[
\bbP(B~|~C_u,C_v,\mathbb K_1,\mathbb K_2) \le \mathbf{1}_{C_u\ne C_v}\cdot \theta \cdot (|C_u||C_v|)^2\cdot\prod_{i=1}^{2}\left(\rho+\rho^2\sum_{j\ge 1}|C_j(\mathbb K_i)|^2\right)\,,
\]
where $
\rho := p\varepsilon(1-p)/(1-\theta).
$
\end{claim}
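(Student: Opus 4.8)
The plan is to condition on the four subgraphs $C_u, C_v, \bbK_1, \bbK_2$ and bound the conditional probability of the event $B \setminus A$ by enumerating the structures that force it. Recall that on $A$, the vertices $u$ and $v$ lie in a common cycle of $\bbI - e$; on $B \setminus A$ we instead have two \emph{distinct} cycles through $e$, one living in $\bbGn$ and one in $\bbGne$. Removing $e$, each such cycle becomes a path from $u$ to $v$; call the $\bbGn$-path $P_1$ and the $\bbGne$-path $P_2$. Since $C_u, C_v$ are the components of $u, v$ in $\bbI - e$, the edges of $P_i$ outside $C_u \cup C_v$ are \emph{not} in $\bbI$, hence each such edge is exclusive to $\bbGn$ (if $i=1$) or to $\bbGne$ (if $i=1$... resp.~$i=2$), and in particular has not been counted in the conditioning on $C_u, C_v, \bbK_1, \bbK_2$. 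This is the key point that lets us treat the ``new'' edges of $P_1$ and $P_2$ as fresh and independent.

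The enumeration splits each path $P_i$ into an \emph{internal} regime, where $P_i$ stays inside $C_u \cup C_v$ (so in fact $u$ and $v$ are joined inside $C_u \cup C_v$ by a route other than $e$, forcing $C_u = C_v$, and contributing a factor bounded by the number of ordered pairs of a vertex in $C_u$ and a vertex in $C_v$ together with the single exclusive edge $e$ of the relevant color — this is where $\theta (|C_u||C_v|)$, or rather its square after combining the two paths, comes from, but since internal paths force $C_u=C_v$ this case is subsumed by the indicator $\mathbf 1_{C_u\ne C_v}$ only in the complementary regime), and an \emph{external} regime, where $P_i$ exits $C_u \cup C_v$, travels through one of the components $C_j(\bbK_i)$, and returns. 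Figure~\ref{fig: exposure of Gn and Gne} is exactly the trichotomy: internal/internal (panel (a), needs $C_u = C_v$), internal/external (panel (b)), external/external (panel (c)). To bound an external excursion through $C_j(\bbK_i)$: the path must leave $C_u \cup C_v$ using a $\bbK_i$-exclusive edge, traverse inside $C_j(\bbK_i)$ (at most $|C_j(\bbK_i)|$ choices of entry and $|C_j(\bbK_i)|$ of exit, and the internal route is determined by the tree/forest structure of $\bbK_i$ up to these endpoints — hence the $|C_j(\bbK_i)|^2$), and re-enter $C_u \cup C_v$ using another $\bbK_i$-exclusive edge; each such exclusive edge is present with probability $p\varepsilon(1-p)/(1-\theta)$, which is precisely $\rho$ — the conditional probability that a given potential edge lies in $\bbGn \setminus \bbI$ (resp.~$\bbGne \setminus \bbI$) given it is not in $\bbI$. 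Summing over the component index $j$ gives $\rho^2 \sum_{j \ge 1} |C_j(\bbK_i)|^2$ for an external excursion and $\rho$ for the degenerate case where the exclusive edge directly reconnects (no detour), yielding the factor $\rho + \rho^2 \sum_j |C_j(\bbK_i)|^2$ for path $P_i$; the two paths $i = 1, 2$ are conditionally independent given the conditioning (their exclusive edges are disjoint and independent), so the bound multiplies, producing $\prod_{i=1}^2(\rho + \rho^2 \sum_j |C_j(\bbK_i)|^2)$. The leftover factor $\theta \cdot (|C_u||C_v|)^2$ accounts for the choice of where $P_1$ and $P_2$ attach to $C_u$ and $C_v$ (an ordered endpoint in each of $C_u, C_v$, for each of the two paths, gives $(|C_u||C_v|)^2$) together with the single possible $\bbI$-edge usage; the indicator $\mathbf 1_{C_u \ne C_v}$ records that if $C_u = C_v$ then $e \in \Conn(\bbI)$, i.e.~event $A$ holds, which is excluded on $B \setminus A$.

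Concretely, I would first argue the structural claim: on $B \setminus A$, there exist two $uv$-paths $P_1 \subseteq \bbGn - e$, $P_2 \subseteq \bbGne - e$, distinct, and if both are contained in $C_u \cup C_v$ then $C_u = C_v$ and $e \in \Conn(\bbI)$ — contradiction; hence at least... actually no, we keep the $C_u=C_v$ case but note it is handled by the indicator. Then I would fix the endpoints of $P_1, P_2$ on $C_u, C_v$ (contributing $(|C_u||C_v|)^2$), and for each $P_i$ do a union bound over ``$P_i$ stays internal'' versus ``$P_i$ makes an external excursion through some $C_j(\bbK_i)$,'' computing each term as above using the conditional edge-probability $\rho$ for $\bbK_i$-exclusive edges and exploiting that, conditioned on $C_u, C_v, \bbK_1, \bbK_2$, the presence of each such exclusive edge is an independent $\mathrm{Ber}(\rho)$. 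Finally, since the two sets of exclusive edges (one for $\bbGn \setminus \bbI$, one for $\bbGne \setminus \bbI$) are disjoint and independent, the conditional probabilities for $P_1$ and $P_2$ factor, giving the displayed product. The main obstacle is bookkeeping: making precise that conditioning on $C_u, C_v, \bbK_1, \bbK_2$ leaves the relevant ``exclusive'' edges fresh (one must be careful that $\bbK_i$ is defined as $\bbGn \setminus (C_u \cup C_v)$ resp.~$\bbGne \setminus (C_u \cup C_v)$, so the edges \emph{between} $C_u \cup C_v$ and the rest, and within $C_u \cup C_v$ but outside $\bbI$, are exactly the undetermined ones), and carefully handling the degenerate sub-cases (a path of length one, i.e.~a single exclusive parallel edge — impossible in a simple graph, so actually the ``$\rho$'' term corresponds to an excursion of length two with both endpoints... ) — I would absorb all such boundary cases into the crude $\rho + \rho^2 \sum_j |C_j(\bbK_i)|^2$ bound rather than optimizing, since Lemma~\ref{lem:com_core} only needs the resulting expectation to be $o(n^{1/3})$.
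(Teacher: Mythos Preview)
Your overall architecture matches the paper's: reduce to $C_u \ne C_v$ via the indicator, identify $\rho = \bbP(f \in \bbGn \mid f \notin \bbI)$, and union-bound over one $uv$-path in each of $\bbGn-e$ and $\bbGne-e$, with the two bounds multiplying because the fresh edges used by $P_1$ and $P_2$ are disjoint (any common edge would lie in $\bbI$). But you misidentify the ``internal'' case, and this leaves a real hole. You assert that an internal path ``stays inside $C_u \cup C_v$ \dots\ forcing $C_u = C_v$'' and is therefore killed by the indicator; you then cannot account for the lone $\rho$ in each factor and start guessing at parallel edges, length-two excursions, etc. The point you are missing: $C_u, C_v$ are the components of $u,v$ in $\bbI - e$, whereas $P_i$ is a path in $\bbGn - e$ (resp.\ $\bbGne - e$), which contains strictly more edges. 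In the paper, an \emph{internal} path is one that uses a single edge of $F_0 := \{\{a,b\}: a \in C_u,\ b \in C_v\}\setminus\{e\}$ lying in $\bbGn\setminus\bbI$ (resp.\ $\bbGne\setminus\bbI$); such a path stays inside the vertex set $C_u\cup C_v$ but does \emph{not} force $C_u = C_v$. This case contributes $\rho|F_0| \le \rho\,|C_u||C_v|$ to the $i$-th factor; after pulling $|C_u||C_v|$ out of both this internal term and the external term $\rho^2\sum_j|F_{u,j,i}||F_{v,j,i}| = \rho^2|C_u||C_v|\sum_j|C_j(\bbK_i)|^2$, one obtains $(|C_u||C_v|)^2\prod_i(\rho + \rho^2\sum_j|C_j(\bbK_i)|^2)$ as stated. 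Your reading of panel~(a) of the figure as ``needs $C_u = C_v$'' is accordingly backwards: that panel depicts two internal paths with $C_u \ne C_v$.

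A smaller correction: it is not true that ``edges of $P_i$ outside $C_u\cup C_v$ are not in $\bbI$''; edges inside a component $C_j(\bbK_i)$ may or may not lie in $\bbI$. What the argument actually needs is that, after conditioning on $C_u, C_v, \bbK_1, \bbK_2$, the only \emph{undetermined} edges are those in $F_0$ and in $F_1 := \{\{a,b\}: a\in C_u\cup C_v,\ b\notin C_u\cup C_v\}$, and for each such $f$ the conditioning reveals only $f\notin\bbI$, whence $\bbP(f\in\bbGn\mid\cdot)=\bbP(f\in\bbGne\mid\cdot)=\rho$.
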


\begin{proof}
We first note that $C_u$ is either equal or disjoint to $C_v$, and that in the former case there exists a path from $v$ to $u$ in $\bbI - e$. We observe that if $C_u=C_v$ then the event $B$ does not occur, hence both sides in the claimed inequality are equal to $0$. Indeed, this is derived directly by combining the facts
$B\subseteq\{e\in\bbI\}\cap A^c$ and 
$A=\{e\in\bbI\}\cap\{C_u=C_v\}.$

Suppose that $C_u\cap C_v=\emptyset$, and consider the edge sets 
$$F_0:=\{\{a,b\}:a\in C_u,~b\in C_v\}\setminus\{e\},$$ and
$$F_1:=\{\{a,b\}:a\in C_u\cup C_v, b\notin C_u\cup C_v\}.$$ Note that for every $f\in F_0\cup F_1$, the only information that is exposed by our conditioning is that $f\notin\bbI$. Therefore, for every two edge subsets $L_1,L_2\subset F_0\cup F_1$ there holds
\begin{equation}\label{eq:indep}
\bbP(L_1\subseteq \bbGn,L_2\subseteq \bbGne~|~C_u,C_v) \le \rho^{|L_1|+|L_2|}
\end{equation}
Indeed, if $L_1\cap L_2\ne \emptyset$ then this conditional probability is $0$ since no edge of $F_0\cup F_1$ is in $\bbI$. Otherwise, by the independence between the different edges, \eqref{eq:indep} follows from the fact that, for every edge $f$,
$
\bbP(f\in\bbGn~|~f\notin\bbI)=\bbP(f\in\bbGne~|~f\notin\bbI)=\rho.
$

We consider two different partitions of $F_1$ given by
\[
F_1 = \bigcup_{j\ge 1,x\in\{u,v\}} F_{x,j,i},~~~~i=1,2\,,
\]
where $F_{x,j,i}$ consists of all the edges between $C_x$ and the $j$-th largest connected component $C_j(\bbK_i)$ of the graph $\bbK_i$. A path from $v$ to $u$ in $\bbGn - e$ can either be {\em internal} and involve an edge from $F_0$, or be {\em external} and involve one edge from $F_{v,j,1}$ and one from $F_{u,j,1}$ for some $j\ge 1$, using the edges from $C_j(\bbK_1)$ to complete the path. Clearly, a similar statement holds for $\bbGne$ where $F_{x,j,1}$ is replaced by $F_{x,j,2}$ for both $x\in\{u,v\}$ (See Figure \ref{fig: exposure of Gn and Gne}). Therefore, we claim that
\begin{align}
 \nonumber
& \bbP(B~|~C_v,C_u,\bbK_1,\bbK_2) \le \\
 \nonumber
&\le~ 
\mathbf{1}_{C_u\ne C_v}\cdot\theta\cdot \left(\rho^2|F_0|^2 +\rho^3|F_0|\sum_{i=1}^{2}\sum_{j\ge 1}\ |F_{v,j,i}||F_{u,j,i}| 
+\rho^4\prod_{i=1}^{2}\sum_{j\ge 1}\ |F_{u,j,i}||F_{v,j,i}|\right) \\
\label{eq:Fs}
&=~ 
\mathbf{1}_{C_u\ne C_v}\cdot\theta \cdot \prod_{i=1}^{2}\left(\rho|F_0|+\rho^2\sum_{j\ge 1}\ |F_{u,j,i}||F_{v,j,i}|\right). 
\end{align}
Indeed, every term in the second line corresponds to a different combination of internal and external paths. The first term corresponds to having two internal paths so we have $|F_0|^2$ choices for having an edge from $F_0$ in both graphs, and the probability that the two edges actually appear is at most $\rho^2$ by \eqref{eq:indep}. Similarly, the second term accounts for having one internal and one external path, where for the external path, say in $\bbGn$, we need to choose the component $C_j(\bbK_1)$ we use, as well as an edge from $F_{u,j,1}$ and an edge from $F_{v,j,1}$. We multiply by $\rho^3\cdot|F_0|$, since in addition to having these two edges appear in $\bbGn$, we also choose an edge from $F_0$ to appear in $\bbGne$. 
The last term is derived by considering the case of two external paths, as we need to choose, for both graphs $
\bbK_i$, a component $C_j(
\bbK_i)$, and edges from $F_{v,j,i}$ and $F_{u,j,i}.$
To conclude, note the multiplicative term $\theta$ accounting for the event $e\in\bbI$.
Alternatively, \eqref{eq:Fs} can be understood as letting each of the graphs $\bbGn,\bbGne$ either choose an internal path with a cost of $\rho$ or an external path with a cost of $\rho^2$. The product of these two terms appears due to the negative correlations from \eqref{eq:indep}. 
The claim is derived from \eqref{eq:Fs} by noting that $|F_0|<|C_u||C_v|$, $|F_{x,j,i}|=|C_x||C_j(\bbK_i)|$ for every $x,j$ and $i$, and a straightforward manipulation.
\end{proof}

We proceed by observing that
\begin{equation}
\label{eq:mon}
\sum_{j\ge 1}|C_j(\mathbb K_1)|^2\le 
\sum_{j\ge 1}|C_j(\bbGn)|^2~\mbox{ and }~ \sum_{j\ge 1}|C_j(\mathbb K_2)|^2 \le \sum_{j\ge 1}|C_j(\bbGne)|^2\,,
\end{equation}
since $\mathbb K_1,\mathbb K_2$ are subgraphs of $\bbGn,\bbGne$ respectively.

Next, for a positive $c\in \Real$, denote by $E_c$  the event that 
\begin{equation*}
\max\left\{\sum_{j\ge 1}|C_j(\bbGn)|^2,\sum_{j\ge 1}|C_j(\bbGne)|^2\right\} \le cn^{4/3},   
\end{equation*}
and recall that ~\cite[Theorem 1]{Lubetzky2020},\cite{Frilet2021} establish that if $\varepsilon^3n\to\infty$ then the pair
$$
n^{-2/3}\cdot\left( (|C_j(\bbGn)|)_{j\ge 1}, (|C_j(\bbGne)|)_{j\ge 1} \right)
$$ weakly converges in $\ell_2$ to a pair of independent copies of a random sequence whose law was identified by Aldous~\cite{Aldous1995}. Therefore,
\begin{equation}\label{eq:limEc}
\lim_{c\to\infty}\lim_{n\to\infty}\bbP(E_c)=1.    
\end{equation}

By combining  \eqref{eq:mon} and Claim \ref{claim: external cycles}  we find that
\begin{equation}
    \bbP\left( B,E_c ~|~C_v,C_u\right) \leq \mathbf{1}_{C_u\ne C_v}\cdot \theta \cdot (|C_u||C_v|)^2\cdot\left(\rho+c\cdot\rho^2 n^{4/3}\right)^2\,.
\label{eq:P_with_Cs}    
\end{equation}

Let $Y$ denote the size of the connected component of a fixed vertex in a $\calG(n,\theta)$ random graph. Note that for every choice of $C_u$, the random variable $\mathbf{1}_{C_u\ne C_v}|C_v|^2$ is stochastically bounded from above by $Y^2$. Indeed, If $v\in C_u$ then $\mathbf{1}_{C_u\ne C_v}=0$. Otherwise, $C_v$ is the component of $v$ in the $\mathcal G(n-|C_u|,\theta)$ random graph $\bbI\setminus C_u$. As a result, $|C_v|$ is indeed dominated by $Y$. Therefore,
\begin{align}
\nonumber
\bbE[\mathbf{1}_{C_u\ne C_v}(|C_u||C_v|)^2] =~ &\bbE\left[|C_u|^2\cdot  \bbE\left[ \left. \mathbf{1}_{C_u\ne C_v}|C_v|^2~\right|~C_u\right]\right]\\ 
\le~& 
\bbE\left[|C_u|^2 \right]\bbE[Y^2] \nonumber \\ \le~& \bbE[Y^2]^2. \label{eq:Y2a}
\end{align}

In addition,
\begin{equation}\label{eq:Y2b}
\bbE[Y^2] = \frac 1n\bbE_{G\sim\mathcal G(n,\theta)}\left[ \sum_{j\ge 1} |C_j(G)|^{3} \right]\le \frac{1}{(1-n\theta)^{3}} = \frac{1+o(1)}{\varepsilon^3},
\end{equation}
where the first equality is derived by averaging over the vertices and accounting for the contribution of each connected component, the inequality follows from the work of Janson and {\L}uczak on subcritical random graphs~\cite{Janson2008}, and the second equality by $1-n\theta = (1-o(1))\varepsilon$.
By assigning \eqref{eq:Y2a}, \eqref{eq:Y2b}, and the relations $\theta<1/n$ and $\rho = (1+o(1))\varepsilon/n$ in \eqref{eq:P_with_Cs}, we find that
\begin{equation}
    \bbP(B,E_c) \le \frac{1+o(1)}{\varepsilon^6 n}\cdot\left(\frac{\varepsilon}{n} + \frac{c\varepsilon^2}{n^{2/3}} \right)^2 =
\frac{1+o(1)}{\varepsilon n^2}\left( (\varepsilon^3 n)^{-1/2} + c(\varepsilon^3 n)^{-1/6}  \right)^2.
\label{eq:BEc}
\end{equation}

Therefore, we derive from \eqref{eq:A}, \eqref{eq:BEc} and $\varepsilon^3n\to\infty$ that
\[
\bbE[|\bbJ|\cdot \mathbf{1}_{E_c}] = 
\binom n2\bbP(e\in\bbJ,E_c)\le \frac{n^2}{2}(\bbP(A) + \bbP(B,E_c))\le  \frac{1+o(1)}{2\varepsilon}.
\]
Finally, note that by Markov's inequality,
\begin{align*}
\bbP(|\bbJ|>\omega\varepsilon^{-1}) \le&~ \bbP(E_c^c) + \bbP(|\bbJ|\cdot \mathbf{1}_{E_c}>\omega\varepsilon^{-1})\\
\le &~1-\bbP(E_c)+\frac{1+o(1)}{2\omega}.
\end{align*}
This concludes the proof using \eqref{eq:limEc} and the assumption that $\omega\to\infty$ as $n\to\infty$.
\end{proof}

We now apply  Lemma \ref{lem:com_core} to show that the $\cKj(\bbGn,\bbGne,\cbI)$-distributed pair $(\bbMn,\bbMne)$ is close to 
$(\bbFn,\bbFne)$, a pair of random forests that, conditioned on $\bbGn,\bbGne$, is $\cK(\bbGn)\times\cK(\bbGne)$-distributed. In other words, to sample $(\bbFn,\bbFne)$, we first sample the pair $(\bbGn,\bbGne)$ and then apply two independent runs of the cycle-breaking algorithm. We stress that, unconditionally, $\bbFn$ and $\bbFne$ are not independent, due to the dependence between $\bbGn$ and $\bbGne$. To state this claim accurately, we consider the scaled versions $F_{n,\lambda}:=\cS(\bbFn)$ and $F_{n,\lambda}^\varepsilon:=\cS(\bbFn)$. 

\begin{lemma}\label{lem:coupling}
    Fix $\lambda\in\Real$ and let $\varepsilon^3 n\to\infty$. There exists a coupling  of $(\bbMn,\bbMne)$ and $(\bbFn,\bbFne)$ such that $\bbMn=\bbFn$ and
    \begin{equation}
         \label{eq: lemma coupling eq}
             d_{\GHP}^4(M_{n,\lambda}^\varepsilon,F_{n,\lambda}^\varepsilon)\arp 0\,,
         \end{equation}
         as $n\to\infty$.
\end{lemma}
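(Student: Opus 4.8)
The plan is to build the coupling by running the joint cycle-breaking algorithm $\cKj(\bbGn,\bbGne,\cbI)$ and a pair of independent runs $\cK(\bbGn)\times\cK(\bbGne)$ on the same pair of graphs $(\bbGn,\bbGne)$, synchronised so that the first coordinate is literally the same process in both — giving $\bbMn=\bbFn$ for free — and so that the second coordinates differ only at steps where an edge of $\bbJ=\Conn(\bbGn)\cap\Conn(\bbGne)$ is involved. Concretely, in the joint run the edge weights $w^1,w^2$ agree exactly on $\cbI$; so the deletion orders of $\bbGn$ and $\bbGne$ can be coupled to coincide on all of $\Conn(\bbGn)\cap\Conn(\bbGne)\setminus(\text{edges never simultaneously in a cycle})$, and the only source of discrepancy between the ``joint'' second forest $\bbMne$ and the ``independent'' second forest $\bbFne$ is the set of edges in $\bbJ$ whose two weights got coupled together in the joint run but are resampled independently in the product run. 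I would make this precise by exposing the weights edge-by-edge in decreasing order and arguing that each such bifurcation point lies in $\bbJ$; hence $\bbMne$ and $\bbFne$ differ by at most $O(|\bbJ|)$ edge swaps (each ``bad'' edge forces swapping one tree edge for one non-tree edge, changing at most two edges of the forest). By Lemma \ref{lem:com_core}, $\bbE[|\bbJ|]\le\varepsilon^{-1}=o(n^{1/3})$, so $|\bbJ| = o_p(n^{1/3})$, i.e.\ $|\bbJ|/n^{1/3}\arp 0$.

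The second ingredient is a quantitative stability statement for the cycle-breaking/MSF map: changing $K$ edges of a forest changes the GHP-distance of its scaled components by an amount controlled by $K$ and the component diameters. A single edge swap $F\mapsto F'$ within the same vertex set changes each metric by a GHP-correspondence cost of at most (roughly) the diameter of the affected component divided by $n^{1/3}$, plus a Prokhorov cost which is $0$ since the measure on vertices is unchanged; and it can merge or split at most two components, each change of $d_\GHP$ bounded by the scaled diameters involved. Summing over the $O(|\bbJ|)$ swaps and using that the scaled diameters of $\bbMn$ (equivalently of $\bbGn$) are tight — indeed $\max_j \diam(C_j(\bbGn))/n^{1/3}$ is $O_p(1)$ by the GHP convergence $G_{n,\lambda}\ard\scrG_\lambda$ in $\mathbb L_4$, and the MSF only shrinks distances between surviving... wait, it does not shrink them, but its scaled diameters are still tight since $M_{n,\lambda}\ard\scrM_\lambda$ in $\mathbb L_4$ — we get
\[
d_\GHP^4(M_{n,\lambda}^\varepsilon,F_{n,\lambda}^\varepsilon) \;\le\; C\cdot |\bbJ|\cdot \frac{D_n}{n^{1/3}},
\]
where $D_n:=\max_j\diam(C_j(\bbGne))$ is tight after scaling. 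Since $|\bbJ|=o_p(n^{1/3})$ and $D_n/n^{1/3}=O_p(1)$, the right-hand side tends to $0$ in probability, which is \eqref{eq: lemma coupling eq}. One technical point to handle here is that $d_\GHP^4$ is an $\ell^4$-type sum over infinitely many components; but only $O(|\bbJ|)$ components are affected (those meeting a bad edge or meeting its endpoints), each contributing a bounded scaled diameter, and the unaffected components contribute exactly $0$, so the $\ell^4$ sum is genuinely a finite sum of $o_p(1)$-many bounded terms and the bound above is legitimate (one may cap the contribution of any single component by a tight random variable and use that at most $O(|\bbJ|)$ of them are nonzero).

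The main obstacle I anticipate is the careful bookkeeping of \emph{which} component-level GHP distances change and by how much under the coupling. It is not enough to say ``$|\bbJ|$ edges differ'': I need the joint run and the product run to be coupled so that the edge sets $\bbMne$ and $\bbFne$ differ \emph{only} inside components touched by $\bbJ$, and I need a clean lemma — either extracted from \cite{AddarioBerry2013} or proved by hand — stating that one ``elementary move'' (delete a cycle edge, the MSF otherwise unchanged) perturbs each $\cS_j$ by at most twice the relevant scaled diameter in GHP distance, with the vertex-measures causing no extra cost because they are uniform and of fixed mass $n^{-2/3}$ per vertex. Setting up the coupling of the two cycle-breaking runs so that discrepancies are confined to $\bbJ$ — essentially, running both reverse-delete algorithms with a shared source of randomness except at $\bbJ$-steps, and checking that a non-$\bbJ$ edge can never be the heaviest-in-a-cycle edge in one run but not the other — is the part that requires genuine care; everything after that is the tightness-plus-$o_p$ argument sketched above.
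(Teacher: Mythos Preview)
Your coupling construction is on the right track and close to what the paper does, but the quantitative conclusion does not follow. The step
\[
d_\GHP^4(M_{n,\lambda}^\varepsilon,F_{n,\lambda}^\varepsilon)\;\le\; C\cdot |\bbJ|\cdot \frac{D_n}{n^{1/3}}\;\arp\;0
\]
is false as written: from Lemma~\ref{lem:com_core} you only get $|\bbJ|=o_p(n^{1/3})$, while $D_n/n^{1/3}=O_p(1)$, so the product is $o_p(n^{1/3})$, not $o_p(1)$. Concretely, if $\varepsilon=n^{-1/4}$ then $\varepsilon^3 n\to\infty$ but $\bbE[|\bbJ|]$ may be of order $n^{1/4}$, and a \emph{single} edge swap inside a component of scaled diameter $\Theta_p(1)$ already moves the GHP distance by $\Theta_p(1)$. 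So ``$O(|\bbJ|)$ swaps, each costing a bounded scaled diameter'' cannot by itself give \eqref{eq: lemma coupling eq}; you are missing one order of magnitude.

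The missing idea is that the relevant event is not ``an edge of $\bbJ$ is present'' but ``an edge of $\cbI\cap\bbJ$ is the \emph{heaviest} edge of some cycle of $C_j(\bbGne)$ under one of the two weight systems''. In the paper's coupling one sets $\bbFn:=\bbMn$ and defines $\bbFne$ as the MSF of $\bbGne$ for weights that agree with $w^\varepsilon$ except that edges in $\cbI\cap\bbJ$ are given fresh independent $\mathrm U[0,p]$ weights; this makes $(\bbFn,\bbFne)$ conditionally $\cK(\bbGn)\times\cK(\bbGne)$ and forces $C_j(\bbMne)=C_j(\bbFne)$ unless some $f\in\cbI\cap\bbJ$ is maximal along a cycle of $C_j(\bbGne)$. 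Conditioning on $\bbGne$ and $\bbJ$, that event has probability at most $2S\cdot|\bbJ|/R$, where $S$ is the number of simple cycles in $C_j(\bbGne)$ and $R$ its girth. Using $S=O_p(1)$, $R\ge n^{1/3}/\omega$ w.h.p., and $|\bbJ|\le\omega\varepsilon^{-1}$ w.h.p.\ (Markov on Lemma~\ref{lem:com_core}), one gets $\bbP(C_j(\bbMne)\ne C_j(\bbFne))\to 0$ for each fixed $j$; the extra factor $1/R\sim n^{-1/3}$ is exactly what your bound lacks. The $\bbL_4$ tail over $j\ge N$ is then handled separately via the known convergence $M_{n,\lambda}^\varepsilon,F_{n,\lambda}^\varepsilon\ard\scrM_\lambda$ in $(\bbL_4,d_\GHP^4)$, not by the swap count.
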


\begin{proof}
Recall that $\bbJ =\Conn(\bbGn)\cap \Conn(\bbGne)$, and $\cbI=\{e\in K_n~:~w_e\le p,b_e=0\}$ is the random graph consists of the edges in $\bbGn\cap\bbGne$ whose weight had not been resampled. We sample the graphs $\bbGn,\bbGne,\bbMn,\bbMne$ using $W_n,W_n^\varepsilon$ (See \S\ref{Section: Preliminaries}), and set $\bbFn:=\bbMn$.
In addition, let $\bbFne$ be the MSF of $\bbGne$ endowed with the following edge weights: 
\[
\tilde w_e = 
\begin{cases}
    w_e^\varepsilon & e\in\bbGne\setminus(\cbI\cap\bbJ), \\
    p\cdot w_e' & e\in\cbI\cap\bbJ,
\end{cases}
\]
where $w_e'$ is an independent $\mathrm{U}[0,1]$ variable.
First, we claim that the forests $\bbFn,\bbFne$ are retained respectively from $\bbGn,\bbGne$ by independent cycle breaking algorithms. Namely, conditioned on $\bbGn,\bbGne$,the pair $(\bbFn,\bbFne)$ is $\cK(\bbGn)\times\cK(\bbGne)$-distributed. This follows from the fact that conditioned on $\bbGn,\bbGne$ and $\cbI$, the weights 
$$ (w_e)_{e\in\Conn(\bbGn)} \mbox{ and }(\tilde w_e)_{e\in\Conn(\bbGne)},$$
which determine the edges that are removed in the cycle breaking algorithms, are i.i.d. Indeed, the only dependency between weights can occur via an edge from $\bbJ$ but for every such an edge $e$, the weights in both graphs are independent either due to resampling (if $e\notin\cbI$) or by the definition of $\tilde w_e$ (if $e\in\cbI$).

Next, we bound the distance  $d_{\GHP}^4(M_{n,\lambda}^\varepsilon,F_{n,\lambda}^\varepsilon).$
Denote by $B_j,~j\ge 1$, the event that the trees $C_j(\bbMne)$ and $C_j(\bbFne)$ are different. Note that the forests $\bbMne$ and $\bbFne$ are retained from $\bbGne$ by the cycle-breaking algorithm using, respectively, the edge weights $(w_e^\varepsilon)_{e\in\bbGne}$ and $(\tilde w_e)_{e\in\bbGne}$, which differ only on $\cbI\cap\bbJ$. 
Therefore, if $B_j$ occurs then there exists a cycle $\gamma$ in $C_j(\bbGne)$ and an edge $f\in \gamma\cap\cbI\cap\bbJ$ that is the heaviest in $\gamma$ with respect to one of the edge weights. Otherwise, the two runs of the cycle-breaking algorithms on $C_j(\bbGne)$  must be identical. 

Let $S$ denote the number of distinct simple cycles in $C_j(\bbGne)$, $R$ the length of the shortest cycle in $C_j(\bbGne)$ (or $R=\infty$ if the component is acyclic), and let $\gamma$ be a cycle in $C_j(\bbGne)$. Conditioned on $\bbGne$ and $\bbJ$, the probability that the heaviest edge of $\gamma$ (in each of the weights) belongs to  $\bbJ$ is bounded from above by $|\bbJ|/{R}$, since  $|\gamma|$ is bounded from below by $R$. Hence, by taking the union bound over all the cycles in the component and the two edge weights we find that $\bbP(B_j~|~\bbGne,\bbJ) \leq 2 \cdot S \cdot |\bbJ|/R.$
Therefore, for every $\omega > 0$, the probability of $B_j$ conditioned on  the event $C$ that $|\bbJ|< \omega  \varepsilon^{-1},~S< \omega,$ and $R > n^{1/3}\omega^{-1}$, is bounded by
$$
\bbP\left(B_j~|~C\right) \leq  \frac{2\cdot \omega\cdot (\omega\varepsilon^{-1}) }{n^{1/3}\omega^{-1}}.
$$
Consequently,
\begin{equation}
\bbP(B_j) \le \bbP(|\bbJ|\ge \omega \varepsilon^{-1}) + 
\bbP(S\ge \omega) + 
\bbP(R\le n^{1/3}\omega^{-1}) +
\frac{2\cdot \omega\cdot (\omega\varepsilon^{-1}) }{n^{1/3}\omega^{-1}}.\label{eq:Bj} 
\end{equation}

Suppose that $\omega=\omega(n)\to\infty$ as $n\to\infty$.
Lemma \ref{lem:com_core} asserts that first term in \eqref{eq:Bj} is negligible. In addition, the second and third terms are also negligible by known results on critical random graphs. Namely, $S$ converges in distribution to an almost-surely finite limit by~\cite{Luczak1994,Aldous1995}, and the fact that $n^{-1/3}\omega R$ almost surely diverges follows from \cite{Addario-Berry2010} for unicyclic components, and from \cite{Luczak1994} for complex components (components with more than one cycle). Choosing $\omega=\omega(n)$ such that $\omega\to \infty$ and $\omega^3 / (\varepsilon n^{1/3})\to 0$ as $n\to\infty$ results in $\bbP(B_j)\to 0$, for every $j\geq 1$.

To complete the proof,  observe that for every $\eta>0$ and $N\ge 1$ there holds
\begin{equation}
    \bbP(d_{\GHP}^4(M_{n,\lambda}^\varepsilon,
F_{n,\lambda}^\varepsilon)>\eta) \le
\sum_{j=1}^{N-1} \bbP(B_j) + \bbP\left( 
\sum_{j=N}^\infty d_{\GHP}(M_{n,\lambda,j}^\varepsilon,
F_{n,\lambda,j}^\varepsilon)^4 > \eta
\right).
\nonumber 
\end{equation}
The first sum is negligible as $n\to\infty$ since $\bbP(B_j)\to 0$ for every $j\ge 1$. In addition, by the fact that both $M_{n,\lambda}^\varepsilon$ and $F_{n,\lambda}^\varepsilon$ converge in distribution as $n\to\infty$ in $(\mathbb L_4,d_{\GHP}^4)$ we have that
\[
\lim_{N\to\infty}\limsup_{n\to\infty} 
\bbP\left( 
\sum_{j=N}^\infty d_{\GHP}(M_{n,\lambda,j}^\varepsilon,
F_{n,\lambda,j}^\varepsilon)^4 > \eta
\right)= 0,   
\]
which completes the proof of the lemma.
\end{proof}

Next, we turn to derive the asymptotic independence of the rescaled measured metric spaces $F_{n,\lambda}$ and $F_{n,\lambda}^\varepsilon$. 

 \begin{lemma}\label{lem:indKs}
  Fix $\lambda\in\Real$ and suppose that $\varepsilon^3n\to\infty$ as $n\to\infty.$ Then, the pair $(F_{n,\lambda},F_{n,\lambda}^\varepsilon)$ converges in distribution to a pair of independent copies of $\scrM_{\lambda}$ in $(\mathbb{L}_4,d_{\GHP}^4)$ as $n\to\infty$.
 \end{lemma}
\begin{proof}
We start by describing, in very high-level terms, how the the space $\scrM_\lambda$ is constructed. Recall the random measured metric space $\scrG_\lambda$ that was introduced in ~\cite{AddarioBerry2009},~\cite{AddarioBerry2013}, and was shown to be the limit of $G_{n,\lambda}$ in distribution, as $n\to\infty$, in $(\mathbb{L}_4,d_{\GHP}^4).$ The random space $\scrM_\lambda$ was defined conditioned on $\scrG_\lambda$ as being $\cK(\scrG_\lambda)$-distributed, where $\cK$ is the continuous analog of the cycle-breaking algorithm.

Next, denote by $(s(G_{n,\lambda,i}))_{i\geq 1}$ and $(s(\scrG_{\lambda,i}))_{i\geq 1}$ the sequence of surpluses of the components in $G_{n,\lambda}$ and $\scrG_{\lambda}$ respectively. In addition, let $(r(G_{n,\lambda,i}))_{i\geq 1}$ and $(r(\scrG_{\lambda,i}))_{i\geq 1}$ be the sequence of minimal length of a core edge in each component. We refer the reader to \cite{AddarioBerry2013} for precise definitions. The following claim follows from the proof of \cite[Theorem 4.4]{AddarioBerry2013}.

\begin{claim}\label{claim: strong}
    Let $\Omega$ be a probability space in which $\bbGn,\scrG_{\lambda}$ are commonly defined such that $\Omega$-almost-surely there holds that
    \begin{align*}
    G_{n,\lambda}&\to\scrG_{\lambda}\mbox{ in }(\mathbb{L}_4,d_{\GHP}^4),\\
    (s(G_{n,\lambda,i}))_{i\geq 1}&\to (s(\scrG_{\lambda,i}))_{i\geq 1},\\
    (r(G_{n,\lambda,i}))_{i\geq 1}&\to (r(\scrG_{\lambda,i}))_{i\geq 1},
    \end{align*}
    as $n\to\infty$.
Then, for every continuity set $S$ of $(\mathbb{L}_4,d_{\GHP}^4)$ for $\scrM_\lambda$, the convergence     
\[
\bbP(M_{n,\lambda} \in S ~|~\bbGn) \to \bbP(\scrM_{\lambda} \in S ~|~\scrG_\lambda)\,,~\mbox{as $n\to\infty$},
\]
of random variables occurs $\Omega$-almost surely. Here, conditioned on $\bbGn$ and $\scrG_{\lambda}$, $\bbMn$ is $\cK(\bbGn)$-distributed, $M_{n,\lambda}:=\cS(\bbMn)$ and $\scrM_\lambda$ is $\cK(\scrG_\lambda)$-distributed.
\end{claim}
In fact, it is proved in \cite[Theorem 4.4]{AddarioBerry2013} that under the conditions of Claim \ref{claim: strong}, the cycle-breaking algorithms carried out on $\bbGn,\scrG_\lambda$ can be coupled such that the convergence of $M_{n,\lambda}\to\scrM_{\lambda}$ in $(\mathbb{L}_4,d_{\GHP}^4)$ also occurs $\Omega$-almost-surely.

Back to noise-sensitivity, the results in ~\cite[Theorem 2]{Lubetzky2020} and \cite[Theorem~9.1.1]{Frilet2021}, establish that if $\varepsilon^3n\to\infty$ as $n\to\infty$ then
\begin{align}
\label{eq: firstconv1}
(G_{n,\lambda},G_{n,\lambda}^\varepsilon) & \ard 
(\scrG_\lambda,\scrG_\lambda'),~\mbox{and} \\
\label{eq: firstconv2}
\left((s(G_{n,\lambda,i}))_{i\geq 1},(s(G^\varepsilon_{n,\lambda,i}))_{i\geq 1}\right)& \ard\left((s(\scrG_{\lambda,i}))_{i\geq 1},(s(\scrG_{\lambda,i}'))_{i\geq 1}\right),
\end{align}
where $\scrG_\lambda'$ is an independent copy of $\scrG_\lambda$. Here the first convergence is in $(\mathbb L_4,d_\GHP^4)$, and the second in the sense of finite dimensional distributions,

The proof of \cite[Theorem 4.1]{AddarioBerry2013} shows that this convergence can be extended to the minimal lengths of core edges, implying that
\begin{align}
\label{eq: firstconv3}
\left((r(G_{n,\lambda,i}))_{i\geq 1},(r(G^\varepsilon_{n,\lambda,i}))_{i\geq 1}\right)& \ard\left((r(\scrG_{\lambda,i}))_{i\geq 1},(r(\scrG_{\lambda,i}'))_{i\geq 1}\right),
\end{align}
as $n\to\infty$.

Using Skorohod's representation theorem, we may work in a probability space $\Omega$ in which the convergences. \eqref{eq: firstconv1},\eqref{eq: firstconv2} and \eqref{eq: firstconv3} occur almost surely. In addition, we can consider the distributions of $F_{n,\lambda},F_{n,\lambda}^\varepsilon,\scrM_\lambda$ and its independent copy $\scrM_\lambda'$ by constructing them via $\Omega$. Namely, conditioned on $\bbGn,\bbGne,\scrG_\lambda,\scrG_\lambda'$ sampled in $\Omega$, we consider the (distributions of the) following random elements:
\begin{itemize}
    \item The pair $(\bbFn,\bbFne)$ is $\cK(\bbGn)\times\cK(\bbGne)$-distributed, $F_{n,\lambda}:=\cS(\bbFn)$ and $F_{n,\lambda}^\varepsilon = \cS(\bbFne)$, and
    \item The pair $(\scrM_\lambda,\scrM_\lambda')$ is $\cK(\scrG_\lambda) \times \cK(\scrG_\lambda')$-distributed.
\end{itemize}
We observe that the pair $(F_{n,\lambda},F_{n,\lambda}^\varepsilon)$ is conditionally independent given $\bbGn,\bbGne$, and the pair $(\scrM_\lambda,\scrM_\lambda')$ is independent and identically distributed. 

Our goal is to show that for every continuity sets $S,S'$ of $(\mathbb L_4,d_{\GHP}^4)$ for the distribution of $\scrM_\lambda$ there holds
\[
\bbP(F_{n,\lambda}\in S,F^\varepsilon_{n,\lambda}\in S') \to 
\bbP(\scrM_\lambda\in S)\bbP(\scrM_\lambda'\in S')
\]
as $n\to\infty$. 

Note that by our assumption on the almost-sure convergences in $\Omega$, we can apply Claim \ref{claim: strong} twice and obtain that for every two such continuity sets $S,S'$ there holds that both convergences 
\begin{equation}
    \label{eq:event1}
\bbP(F_{n,\lambda}\in S ~|~\bbGn) \to \bbP(\scrM_\lambda\in S ~|~\scrG_\lambda)\,,~\mbox{as $n\to\infty$},
\end{equation}
and
\begin{equation}
    \label{eq:event2}
\bbP(F_{n,\lambda}^\varepsilon\in S' ~|~\bbGne) \to
\bbP(\scrM_\lambda'\in S' ~|~\scrG_\lambda')\,,~\mbox{as $n\to\infty$}
\end{equation}
occur $\Omega$-almost surely.
Consequently, the proof is concluded as follows:
\begin{align*}
\bbP(F_{n,\lambda}\in S,F^\varepsilon_{n,\lambda}\in S')&=~
\bbE[\bbP(F_{n,\lambda}\in S,F^\varepsilon_{n,\lambda}\in S'~|~\bbGn,\bbGne)]\\
&=~
\bbE[\bbP(F_{n,\lambda}\in S ~|~\bbGn)\cdot\bbP(F_{n,\lambda}^\varepsilon\in S' ~|~\bbGne)]\\
&\to~
 \bbE[\bbP(\scrM_\lambda\in S ~|~\scrG_\lambda)\cdot\bbP(\scrM_\lambda'\in S' ~|~\scrG_\lambda')]\\
 &=~
 \bbE[\bbP(\scrM_\lambda\in S ~|~\scrG_\lambda)]\cdot \bbE[\bbP(\scrM_\lambda'\in S' ~|~\scrG_\lambda')]\\
&=~\bbP(\scrM_\lambda\in S)\cdot \bbP(\scrM_\lambda'\in S').
\end{align*}
The first equality holds by the law of total expectation, and the second equality is due to the conditional independence of $F_{n,\lambda},F_{n,\lambda}^\varepsilon$ given $\bbGn,\bbGne$. The convergence, which occurs as $n\to\infty$, is obtained from the $\Omega$-almost-sure convergences \eqref{eq:event1}, \eqref{eq:event2} and using the dominated convergence theorem. The next equality is obtained by the independence of $\scrG_\lambda$,$\scrG_\lambda'$ which is where the noise-sensitivity of the measured metric structure of $\bbGn,\bbGne$ is being used. The last equality follows from the law of total expectation. 

\end{proof}

We conclude this subsection with a proof of the noise-sensitivity of the MSF of $\bbGn$, which we derive from the following well-known theorem.
\begin{theorem}[{\cite[Theorem~3.1]{billing1999}}]
    \label{thm: slutzky}
    Let $S$ be a Polish space with metric $\rho$ and $(X_n,Y_n)$ be random elements of $S\times S$. If $~Y_n\ard X$ and $\rho(X_n,Y_n)\arp 0$ as $n\to\infty$, then $X_n\ard X$.
    \end{theorem}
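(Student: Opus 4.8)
The plan is to check the Portmanteau criterion directly: since $S$ is a metric space, $X_n\ard X$ is equivalent to $\limsup_{n\to\infty}\mathbb{P}(X_n\in F)\le\mathbb{P}(X\in F)$ for every closed $F\subseteq S$. So I would fix a closed set $F$ and, for $\delta>0$, consider its closed enlargement $F^\delta:=\{x\in S:\rho(x,F)\le\delta\}$. The first step is the elementary inclusion $\{X_n\in F\}\subseteq\{Y_n\in F^\delta\}\cup\{\rho(X_n,Y_n)>\delta\}$ — if $X_n\in F$ and $\rho(X_n,Y_n)\le\delta$ then $Y_n\in F^\delta$ — which yields
\[
\mathbb{P}(X_n\in F)\le\mathbb{P}(Y_n\in F^\delta)+\mathbb{P}(\rho(X_n,Y_n)>\delta).
\]

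Next I would take $\limsup_{n\to\infty}$ of both sides. The hypothesis $\rho(X_n,Y_n)\arp 0$ makes the last term vanish, and because $F^\delta$ is closed, applying Portmanteau to $Y_n\ard X$ gives $\limsup_n\mathbb{P}(Y_n\in F^\delta)\le\mathbb{P}(X\in F^\delta)$. Hence $\limsup_n\mathbb{P}(X_n\in F)\le\mathbb{P}(X\in F^\delta)$ for every $\delta>0$. Finally I would let $\delta\downarrow0$: since $F$ is closed, $\bigcap_{\delta>0}F^\delta=F$, so by continuity from above of the (finite) law of $X$ we get $\mathbb{P}(X\in F^\delta)\downarrow\mathbb{P}(X\in F)$. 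Combining the two displays gives $\limsup_n\mathbb{P}(X_n\in F)\le\mathbb{P}(X\in F)$, and Portmanteau then delivers $X_n\ard X$.

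This is the classical ``converging together'' lemma, so there is no real obstacle; the only subtlety worth flagging is the order in which the two limiting parameters are sent to their limits — one must first fatten $F$ to the closed set $F^\delta$ before invoking weak convergence of $Y_n$ (which controls only $\limsup$s over closed sets), and only afterwards shrink $\delta\to0$ using that $F$ is itself closed. An equally valid alternative would route everything through bounded Lipschitz test functions $f$: estimate $|\mathbb{E} f(X_n)-\mathbb{E} f(Y_n)|\le\mathbb{E}[\min(2\|f\|_\infty,\mathrm{Lip}(f)\,\rho(X_n,Y_n))]$, which tends to $0$ since $\rho(X_n,Y_n)\arp0$, and combine with $|\mathbb{E} f(Y_n)-\mathbb{E} f(X)|\to0$; here one uses that on a metric (in particular Polish) space convergence against all bounded Lipschitz functions characterizes weak convergence. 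I would present the closed-set version, as it is self-contained.
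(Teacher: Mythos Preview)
Your proof is correct and is the standard Portmanteau argument for this classical result. Note, however, that the paper does not supply its own proof of this statement: it is quoted as \cite[Theorem~3.1]{billing1999} and used as a black box in the proof of Theorem~\ref{theorem: secondtheorem}, Part~\eqref{part: GNP1}. So there is no paper-proof to compare against; what you wrote is essentially the textbook argument one would find in Billingsley.
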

\begin{proof}[Proof of Theorem \ref{theorem: secondtheorem}, Part \eqref{part: GNP1}]
Denote the polish metric space $S=(\mathbb L_4,d_{\GHP}^4)^2$ endowed with some product metric $\rho$. Suppose that the random elements $$((M_{n,\lambda},M_{n,\lambda}^\varepsilon),(F_{n,\lambda},F_{n,\lambda}^\varepsilon)) \in S\times S$$ are sampled via the coupling from Lemma \ref{lem:coupling}.
 Lemma \ref{lem:indKs} asserts that $(F_{n,\lambda},F_{n,\lambda}^\varepsilon)$ converges in distribution to a pair of independent copies of $\scrM_{\lambda}$. In addition, by Lemma \ref{lem:coupling},
\[
\rho((M_{n,\lambda},M_{n,\lambda}^\varepsilon),(F_{n,\lambda},F_{n,\lambda}^\varepsilon))\arp 0\,,\]
as $n\to\infty$.
Consequently, we derive from Theorem \ref{thm: slutzky} that $(M_{n,\lambda},M_{n,\lambda}^\varepsilon)$ converges in distribution to a pair of independent copies of $\scrM_{\lambda}$, as claimed.
\end{proof}

\subsection{Noise Stability of the MSF}
\label{Subsection: Noise Stability}
We now assume that $\varepsilon^3n\to 0$ as $n\to \infty$. In this case, the noise stability of the MSF follows from the similarity between the cycle breaking algorithms. Namely, the  $\cKj(\bbGn,\bbGne,\cbI)$-distributed pair $(\bbMn,\bbMne)$ is obtained by removing the exact same set of edges from both graphs.
We derive this from the following claim which asserts that all the cycles in $\bbGn$ and $\bbGne$ appear in their common subgraph $\cbI$ consisting of the edges whose weight was not resampled.
\begin{claim}
    \label{claim: core equality}
    Let $\lambda \in \Real$, $j\geq 1$, $\varepsilon^3 n \to 0 $, and $\bbGn$ and $\cbI$ defined  as in \S\ref{subsec: notations}. Let $B_j$ denote the event that 
    $\Conn(C_j(\bbGn)) =\Conn(C_j(\cbI))$. Then, $\bbP(B_j)\to 1$ as $n\to \infty$. 

\end{claim}
\begin{proof}
We observe that conditioned on $\bbGn$, the graph $\cbI$ is obtained from $\bbGn$ by removing each edge independently with probability $\varepsilon$. Therefore, by \cite[Lemma~5.4]{Lubetzky2020}, the event $A_j$ that $C_j(\cbI)\subseteq C_j(\bbGn)$ occurs with probability tending to $1$ as $n\to\infty.$
In addition, under the event $A_j$, the event $B_j$ does not occur only if there exists an edge $e\in\Conn(C_j(\bbGn))$ that $\cbI$ did not retain. Therefore, for every $\omega=\omega(n)>0$,
\begin{equation}\label{eq:Bjc}
\bbP(B_j^c~|~A_j) \le \bbP(|\Conn(C_j(\bbGn))|>\omega n^{1/3}) + \varepsilon\omega n^{1/3},    
\end{equation}

 where the second term bounds the expected number of edges from $\Conn(\bbGn)$ that $\cbI$ did not retain, conditioned on $|\Conn(C_j(\bbGn))|\le \omega n^{1/3}$. We derive the claim by combining $\bbP(A_j)\to 1$ and assigning in \eqref{eq:Bjc} a sequence $\omega = \omega(n)$ such that $\omega\to\infty$ and $\varepsilon\omega n^{1/3}\to 0$ as $n\to\infty$. Indeed, in such a case we have that $\bbP(n^{-1/3}|\Conn(C_j(\bbGn))|>\omega)\to 0$, since the maximum number of cycles in $\bbGn$ is bounded in probability~\cite{Luczak1994}, and so is the length of the largest cycle in $C_j(\bbGn)$ divided by $n^{1/3}$~\cite{Luczak1994,Addario-Berry2010}. 
 \end{proof}

Note that the assertion in Claim \ref{claim: core equality} also holds for $\bbGne$ since $(\bbGn,\cbI)\stackrel{d}{=}(\bbGne,\cbI)$. We now turn to conclude the proof of Theorem \ref{theorem: secondtheorem}.

\begin{proof}[Proof of  Theorem \ref{theorem: secondtheorem} Part \ref{part: GNP2}]
Denote by $\cbM$ the MSF of the graph $\cbI$ endowed with the edge weights from $W_n$, and let $\check M:=\cS(\cbM)$. First, we argue that  for every fixed $j \ge 1$, 
\begin{equation}\label{eq:convergence_j}
d_{\GHP}(M_{n,\lambda,j},\check M_j) \arp 0 \end{equation}
as $n\to \infty$.
By Claim \ref{claim: core equality} we can condition on the event $B_j$. Under this event, the joint cycle breaking algorithm running on $C_j(\bbGn)$ and $C_j(\cbI)$ removes the same edges in both graphs. Since $\cbI$ is a subgraph of $\bbGn$, we deduce that 
$C_j(\bbMn)$ is obtained from $C_j(\cbM)$ by the addition of the forest $C_j(\bbGn) \setminus C_j(\cbI)$. 
We derive \eqref{eq:convergence_j} by the proof of \cite[Theorem 2]{Lubetzky2020}, which shows that with probability tending to $1$ as $n\to\infty$, the graph $C_j(\bbGn)$ is contained in a neighborhood of radius $o(n^{1/3})$ around $C_j(\cbI)$, and that the two graphs differ by $o(n^{2/3})$ vertices.

Since $(\bbGn,\cbI,W_n)\stackrel{d}{=}(\bbGne,\cbI,W_n^\varepsilon)$, we can use the same argument for $\bbGne$ instead of $\bbGn$, and find that
\[
d_{\GHP}(M_{n,
\lambda,j},M_{n,
\lambda,j}^\varepsilon) \arp 0,
\]
as $n\to\infty$. To conclude Theorem \ref{theorem: secondtheorem} Part \ref{part: GNP2} we need to extend the component-wise convergence to $\left(\bbL_4,d_{GHP}^4\right)$. This is carried out exactly as in the proof of Lemma \ref{lem:coupling}, following \cite[Theorems 4.1, 4.4]{AddarioBerry2013}.
\end{proof}

\section{Proof of Theorem \ref{theorem: maintheorem}}
\label{sec:MST}
The connection between the scaling limits of the MST $\bbM_n$ and the largest component of the MSF $\bbMn$ was established in \cite[Proposition~4.8]{AddarioBerry2013}. Let  $M_n = \hat{\cS}(\bbMn)$ and $\hat{M}_{n,\lambda,1} = \hat{\cS}(\bbM_{n,\lambda,1})$ (see \S\ref{subsec: notations}).
Then, for every $\eta>0$, 
\begin{equation}\label{eq:4.8}
    \lim_{\lambda \to \infty}\limsup_{n\to\infty}\bbP\left(d_{\GHP}(M_n,\hat{M}_{n,\lambda,1})>\eta\right)=0,
\end{equation}
and a similar statement holds for $M_n^\varepsilon,\hat{M}_{n,\lambda,1}^\varepsilon$. 

In addition, let $\hat{\scrM}_{\lambda,1}$ be the measured metric space obtained from the scaling limit $\scrM_{\lambda,1}$ by renormalizing its measure to a probability measure. The so-called principle of accompanying laws ~\cite[Theorem 9.1.13]{Stroock} yields that $M_n \ard \scrM$ in $(\calM,d_{\GHP})$, where the random measured metric space $\scrM$ is the limit of  $\hat{\scrM}_{\lambda,1} \ard \scrM$ in $d_{\GHP}$ as $n\to\infty$. Given this background, Theorem \ref{theorem: maintheorem} follows directly from Theorem \ref{theorem: secondtheorem}.

\begin{proof}[Proof of Theorem \ref{theorem: maintheorem}]
For Part \ref{part: 1}, we let $\rho$ be some product metric on $(\calM,d_{\GHP})^2$, and deduce from \eqref{eq:4.8} that for every $\eta>0,$
\[
\lim_{\lambda \to \infty}\limsup_{n\to\infty}\bbP\left(\rho( (M_n,M_n^\varepsilon) , (\hat{M}_{n,\lambda,1},\hat{M}_{n,\lambda,1}^\varepsilon))>\eta\right)=0.
\]
In addition, Theorem \ref{theorem: maintheorem} Part \ref{part: GNP1} implies that $$\left(\hat{M}_{n,\lambda,1},\hat{M}^\varepsilon_{n,\lambda,1}\right)\ard \left(\hat{\scrM}_{\lambda,1},\hat{\scrM}_{\lambda,1}'\right),$$ 
in $(\calM,d_{\GHP})^2$, as $n\to\infty$. Let  $\hat{\scrM}_{\lambda,1}'$ to be an independent copy of $\hat{\scrM}_{\lambda,1}$, hence
\[
\left(\hat{\scrM}_{\lambda,1},\hat{\scrM}_{\lambda,1}'\right)\ard \left(\scrM,\scrM'\right),
\]
as $\lambda\to\infty$ in $d_{\GHP}$, where $\scrM$ and $\scrM'$ are i.i.d.
Therefore, by the principle of accompanying laws, as $n\to\infty$, the pair $(M_n,M_n^\varepsilon)\ard\left(\scrM,\scrM'\right)$ in $d_{\GHP}$.

For Part 2, note that for every $\eta>0$ and every $\lambda\in\Real$ there holds
\[
\bbP(d_{\GHP}(M_n,M_n^\varepsilon)>\eta) \le
\bbP(D_1) +
\bbP(D_2) +
\bbP(D_3),
\]
where $D_1,D_2$ and $D_3$ are the events that the GHP distance between $ \left(M_n,\hat{M}_{n,\lambda,1}\right)$, $\left(M_n^\varepsilon,\hat{M}_{n,\lambda,1}^\varepsilon\right)$ and $\left(\hat{M}_{n,\lambda,1},\hat{M}^\varepsilon_{n,\lambda,1}\right)$ is greater than $\eta /3$, respectively.

Part \ref{part: GNP2} of Theorem \ref{theorem: secondtheorem} implies that  
$ 
d_{\GHP}\left(\hat{M}_{n,\lambda,1},\hat{M}^\varepsilon_{n,\lambda,1}\right)\arp 0
$
as $n\to\infty$, thereby $\bbP(D_3)\to 0$. By applying \eqref{eq:4.8} to both $ \left(M_n,\hat{M}_{n,\lambda,1}\right)$ and $ \left(M_n^\varepsilon,\hat{M}_{n,\lambda,1}^\varepsilon\right)$, we find that 
\[
\lim_{\lambda\to\infty}\limsup_{n\to\infty}\bbP(D_1)+\bbP(D_2)=0,
\]
therefore 
$\bbP(d_{\GHP}(M_n,M_n^\varepsilon)>\eta) \to 0$ as $n\to\infty$, as claimed.
\end{proof}

\section{Open Problems}
\label{sec:open}
We conclude with three open problems that naturally arise from our work.
First, it will be interesting to study the joint limit law of the scaled MSTs $(M_n,M_n^\varepsilon)$  and of the scaled MSFs $(M_{n,\lambda},M_{n,\lambda}^\varepsilon)$ in the critical noise regime $\varepsilon = tn^{-1/3},~t\in\Real$. Rossignol~\cite{Rossignol2021} identified a non-trivial correlation between $\bbGn$ and $\bbGne$, but we suspect that the correlations between the MSFs are even more involved. Namely, in this regime the subgraphs $\Conn(\bbGn)$ and $\Conn(\bbGne)$ share a positive fraction of their {\em weighted} edges. Hence, on top of the correlations between $\bbGn$ and $\bbGne$, the joint cycle-breaking algorithm retaining $\bbMn,\bbMne$ is also non-trivially correlated.

Second, even though this paper considers $\mathrm{U}[0,1]$-distributed weights, our setting can be equivalently described in discrete terms. It is also natural to consider similar problems in a continuous noise model, e.g., by letting $(w_e,w_e^\varepsilon)$ be identically distributed normal variables with covariance $\varepsilon$. We ask: what is the sensitivity-stability noise threshold of the scaled MST in this model? is it still aligned with the critical window of the Erd\H{o}s-R\'enyi random graphs?

Third, it is interesting to explore for which functions of the MST our theorem establishes noise sensitivity and stability. This requires a better understanding of the limit $\scrM$ and its continuity sets. For example, consider the diameter of $\scrM$ or the distance between two independent random points in it. Are these random variables continuous? What are their support? It is not entirely clear to us how to answer these questions using the construction in~\cite{AddarioBerry2013} of $\scrM$ as the limit of $\scrM_{\lambda}$ as $\lambda\to\infty$. However, it appears that the recent explicit construction of $\scrM$ as the Brownian parabolic tree ~\cite{broutin2023convex} can be quite useful here.

\medskip

\textbf{Acknowledgment.} We thank the anonymous referee for their very helpful comments and suggestions.

\bibliographystyle{abbrv}
\bibliography{MST-bib}
\end{document}